\newlength{\bibitemsep}
\newlength{\bibparskip}
\let\oldthebibliography\thebibliography
\renewcommand\thebibliography[1]{%
 \oldthebibliography{#1}%
 \setlength{\parskip}{\bibparskip}%
 \setlength{\itemsep}{\bibitemsep}%
}
\newcommand{\ZZ}{\mathbf{Z}}
\newcommand{\QQ}{\mathbf{Q}}
\newcommand{\Qp}{\mathbf{Q}_p}
\newcommand{\QQbar}{\overline{\QQ}}
\newcommand{\Zp}{\ZZ_p}
\newcommand{\FF}{\mathbf{F}}
\newcommand{\rb}{\bar{\rho}}
\newcommand{\gf}{\mathfrak{g}_{\FF}}
\newcommand{\cE}{\mathcal{E}}
\newcommand{\cG}{\mathcal{G}}
\newcommand{\cL}{\mathcal{L}}
\newcommand{\cO}{\mathcal{O}}
\newcommand{\cR}{\mathcal{R}}
\newcommand{\cS}{\mathcal{S}}
\newcommand{\cT}{\mathcal{T}}
\newcommand{\cV}{\mathcal{V}}
\newcommand{\fX}{\mathfrak{X}}
\newcommand{\ord}{\mathrm{ord}}
\newcommand{\nord}{\mathrm{no}}
\newcommand{\der}{\mathrm{der}}
\newcommand{\htimes}{\operatorname{\hat{\otimes}}}
\newcommand{\bV}{\overline{V}}
\renewcommand{\le}{\leqslant}
\renewcommand{\ge}{\geqslant}
\DeclareMathOperator{\Spf}{Spf}
\DeclareMathOperator{\Gal}{Gal}
\DeclareMathOperator{\Frob}{Frob}
\DeclareMathOperator{\GL}{GL}
\DeclareMathOperator{\Ad}{Ad}
\newcommand{\CNLO}{\underline{\mathrm{CNL}}_{\cO}}
\newtheorem{definition}{Definition}[section]
\newtheorem{theorem}[definition]{Theorem}
\newtheorem{proposition}[definition]{Proposition}
\newtheorem{conjecture}[definition]{Conjecture}
\theoremstyle{remark}
\declaretheorem[name=Remark,sibling=theorem,qed={\lower-0.3ex\hbox{$\diamond$}}]{remark}
\declaretheorem[name=Example,sibling=theorem,qed={\lower-0.3ex\hbox{$\diamond$}}]{example}
\begin{document}
 \title{P-adic L-functions in universal deformation families}
  \author{David Loeffler}
 \address{David Loeffler\\
  Mathematics Institute, University of Warwick, Coventry CV4 7AL, UK}
 \email{d.a.loeffler@warwick.ac.uk}
 \urladdr{\url{http://warwick.ac.uk/fac/sci/maths/people/staff/david_loeffler/}}
 \thanks{Supported by a Royal Society University Research Fellowship.}

 \renewcommand{\emailaddrname}{\emph{Email}}
 \renewcommand{\curraddrname}{\emph{ORCID}}
 \curraddr{\href{http://orcid.org/0000-0001-9069-1877}{\texttt{orcid.org/0000-0001-9069-1877}}}

 \begin{abstract}
  We construct examples of $p$-adic $L$-functions over universal deformation spaces for $\GL_2$. We formulate a conjecture predicting that the natural parameter spaces for $p$-adic $L$-functions are not the usual eigenvarieties (parametrising nearly-ordinary families of automorphic representations), but other, larger spaces depending on a choice of a parabolic subgroup, which we call `big parabolic eigenvarieties'.
 \end{abstract}

 \maketitle

\section{Introduction}

 It is well known that many interesting automorphic $L$-functions $L(\pi, s)$ have $p$-adic counterparts; and that these can often be extended to multi-variable $p$-adic $L$-functions, in which the automorphic representation $\pi$ itself also varies in a $p$-adic family of some kind. In the literature so far, the $p$-adic families considered have been \emph{Hida families}, or more generally \emph{Coleman families} -- families of automorphic representations which are principal series at $p$, together with the additional data of a ``$p$-refinement'' (a choice of one among the Weyl-group orbit of characters from which $\pi_p$ is induced). In Galois-theoretic terms, this corresponds to a full flag of subspaces in the local Galois representation at $p$ (or in its $(\varphi, \Gamma)$-module, for Coleman families). The parameter spaces for these families are known as \emph{eigenvarieties}.

 The aim of this note is to give an example of a $p$-adic $L$-function varying in a family of a rather different type: it arises from a family of automorphic representations of $\GL_2\times \GL_2$, but the parameter space for this family (arising from Galois deformation theory) has strictly bigger dimension than the eigenvariety for this group -- it has dimension 4, while the eigenvariety in this case has dimension 3. We also sketch some generalisations of the result which can be proved by the same methods. This corresponds to the fact that a $p$-refinement is a little more data than is actually needed to define a $p$-adic $L$-function: rather than a full flag, it suffices to have a single local subrepresentation of a specific dimension (a Panchishkin subrepresentation), which is a weaker condition and hence permits variation over a larger parameter space.

 We conclude with some speculative conjectures whose aim is to identify the largest parameter spaces on which $p$-adic $L$-functions and Euler systems can make sense. We conjecture that, given a reductive group $G$ and parabolic subgroup $P$ (and appropriate auxiliary data), there should be two natural $p$-adic formal schemes, the \emph{big} and \emph{small} $P$-nearly-ordinary eigenvarieties. These coincide if $P$ is a Borel subgroup, but not otherwise; if $G = \GL_2$ and $P$ is the whole of $G$, then the big eigenvariety is the 3-dimensional Galois deformation space of a modular mod $p$ representation (with no local conditions at $p$). In general, we expect that the ``natural home'' of $p$-adic $L$-functions -- and also of Euler systems -- should be a big ordinary eigenvariety for an appropriate parabolic subgroup.

 \subsubsection*{Acknowledgements} It is a pleasure to dedicate this article to Bernadette Perrin-Riou, in honour of her immense and varied contributions to number theory in general, and to $p$-adic $L$-functions in particular, which have been an inspiration to me throughout my career.

 I would also like to thank Daniel Barrera Salazar, Yiwen Ding, and Chris Williams for informative discussions in connection with this paper, and Sarah Zerbes for her feedback on an earlier draft.

\section{Families of Galois representations}

 \subsection{The Panchishkin condition}

  Let $L$ be a finite extension of $\Qp$ and let $V$ be a finite-dimensional vector space with a continuous linear action of $\Gamma_{\QQ} = \Gal(\overline{\QQ}/\QQ)$. Recall that $V$ is said to be \emph{geometric} if it is unramified at all but finitely many primes and de Rham at $p$; in particular it is Hodge--Tate at $p$, so we may consider its Hodge--Tate weights. (In this paper, we adopt the common, but not entirely universal, convention that the cyclotomic character has Hodge--Tate weight $+1$.)

  \begin{definition}
   \label{def:panch}
   We say $V$ satisfies the \textbf{Panchishkin condition} if it is geometric, and the following conditions hold:
   \begin{enumerate}
    \item We have
    \[ (\text{number of Hodge--Tate weights $\ge 1$ of $V$}) = \dim V^{(c = +1)}\]
    where $c \in \Gamma_\QQ$ is (any) complex conjugation.
    \item There exists a subspace $V^+ \subseteq V$ stable under $\Gamma_{\QQ_p}$ such that $V^+$ has all Hodge--Tate weights $\ge 1$, and $V / V^+$ has all Hodge--Tate weights $\le 0$.
   \end{enumerate}
  \end{definition}

  \begin{remark} \
   \begin{enumerate}[(i)]

    \item Note that $V^+$ is unique if exists; we call it the \emph{Panchishkin subrepresentation} of $V$ at $p$.

    \item If $V$ is the $p$-adic realisation of a motive $M$, then condition (1) is equivalent to requiring that $L(M, 0)$ is a critical value of the $L$-function $L(M, s)$ in the sense of \cite{deligne79}.

    \item The Panchishkin condition is closely related to the concept of \emph{near-ordinarity}: a representation $V$ is said to be \emph{nearly-ordinary} if it is geometric, and there exists a full flag of subspaces of $V$ such that the Hodge--Tate weights of the graded pieces are in increasing order. However, we want to emphasise here that near-ordinarity is an unnecessarily restrictive hypothesis for the study of $p$-adic $L$-functions.\qedhere
   \end{enumerate}
  \end{remark}

 \subsection{Panchishkin families}

  By a ``Panchishkin family'', we mean a family of $p$-adic Galois representations \emph{equipped with a family of Panchishkin subobjects}. For simplicity, we shall suppose here that $p > 2$, so that the action of complex conjugation is diagonalisable. Let $\cO$ be the ring of integers of $L$, and $\FF$ its residue field. We let $\CNLO$ be the category of complete Noetherian local $\cO$-algebras with residue field $\FF$.

  \begin{definition}
   \label{def:panchfam}
   Let $\cR$ be an object of $\CNLO$. A \emph{Panchishkin family of Galois representations} over $\cR$ consists of the following data:
   \begin{itemize}
    \item a finite free $\cR$-module $\cV$ with an $R$-linear continuous action of $\Gamma_{\QQ}$, unramified at almost all primes.
    \item an $\cR$-direct-summand $\cV^+ \subseteq \cV$ stable under $\Gamma_{\Qp}$, of $\cR$-rank equal to that of $\cV^{c = 1}$,
   \end{itemize}
   satisfying the following condition:
   \begin{itemize}
    \item The set $\Sigma(\cV, \cV^+)$ of maximal ideals $x$ of $R[1/p]$ such that $\cV_x$ satisfies the Panchishkin condition and $\cV^+_x$ is its Panchishkin subrepresentation is dense in $\operatorname{Spec} R[1/p]$.
   \end{itemize}
  \end{definition}

  \begin{example}[Cyclotomic twists of a fixed representation]
   The original examples of Panchishkin families are those of the following form. Let $V$ be an $L$-linear representation of $\Gamma_{\QQ}$ satisfying the Panchishkin condition, and $V^\circ$ a $\cO$-lattice in $V$ stable under   $\Gamma_{\QQ}$. We let $\Lambda$ denote the Iwasawa algebra $\cO[[\Zp^\times]]$, and $\mathbf{j}$ the canonical character $\ZZ_p^\times \to \Lambda^\times$.

   If $\dim V^{c=1} = \dim V^{c=-1}$, then we can take $\cR$ to be the localisation of $\Lambda$ at any of its $(p-1)$ maximal ideals, corresponding to characters $\Zp^\times \to \FF^\times$; otherwise, we need to assume our maximal ideal corresponds to a character trivial on $-1$. We can then let $\cV = V^\circ \otimes (\chi_{\mathrm{cyc}})^{\mathbf{j}}$, and $\cV^+ = V^{\circ+} \otimes (\chi_{\mathrm{cyc}})^{\mathbf{j}}$, where $V^{\circ+} = V^\circ \cap V^+$.

   By construction, $\Sigma(\cV, \cV^+)$ contains all points of $\operatorname{Spec} \mathcal{R}[1/p]$ corresponding to characters of the form\footnote{We use additive notation for characters, so $j + \chi$ is a shorthand for the character $z \mapsto z^j \chi(z)$.} $j + \chi$, where $\chi$ is of finite order and $j$ is an integer in some interval containing 0 (depending on the gap between the Hodge--Tate weights of $V^+$ and $V/V^+$). In particular, it is Zariski-dense, as required.
  \end{example}

  The following conjecture is due to Coates--Perrin-Riou \cite{coatesperrinriou89} and Panchishkin \cite{panchishkin94} in the case of cyclotomic twists of a fixed representation. The generalisation to families as above is ``folklore''; we have been unable to locate its first appearance, but is (for instance) a special case of more general conjectures of Fukaya and Kato \cite{fukayakato06} (who have also investigated the case of non-commutative base rings $\cR$, which we shall not attempt to consider here).

  \begin{conjecture}
   \label{conj:main}
   For $(\cV, \cV^+)$ as above, there exists an element $\cL(\cV, \cV^+) \in \operatorname{Frac} \cR$ such that for all $x \in \Sigma(\cV, \cV^+)$ we have
   \[ \mathcal{L}(\cV, \cV^+)(x) = (\text{Euler factor}) \cdot \frac{L(M_x, 0)}{(\text{period})},\]
   where $M_x$ is the motive whose realisation is $\cV_x$.
  \end{conjecture}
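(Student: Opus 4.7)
The plan is to reduce first to the cyclotomic-twist special case and then to extend to arbitrary Panchishkin families. For the cyclotomic family of the preceding example, $\mathcal{L}(\cV,\cV^+)$ should coincide with the classical $p$-adic $L$-function of Coates--Perrin-Riou and Panchishkin, constructed automorphically as an element of $\operatorname{Frac}\Lambda$ whose moments at characters $j+\chi$ recover the critical values $L(M_x,0)$ up to an Euler factor, a period, and the standard Panchishkin modification at $p$. The existence of $V^+$ with the prescribed Hodge--Tate weights is precisely what bounds the growth of the associated distribution on $\Zp^\times$ so that it lies in the fraction field of $\Lambda$, via the Amice--V\'elu--Vi\v{s}ik admissibility criterion.

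For a general family $(\cV,\cV^+)$ over $\cR$, uniqueness of $\mathcal{L}(\cV,\cV^+)$ is automatic from the Zariski-density of $\Sigma(\cV,\cV^+)$, so the content of the conjecture is existence. The natural automorphic approach is to assume that $(\cV,\cV^+)$ arises from a $p$-adic family of automorphic forms parametrised by a ``big parabolic eigenvariety'' in the sense sketched in the introduction, and to build $\mathcal{L}$ by spreading out overconvergent cohomology classes or modular-symbol distributions over this enlarged parameter space, generalising the existing constructions in the nearly-ordinary and finite-slope settings. An alternative Galois-theoretic route is to produce a family of Euler systems for $(\cV,\cV^+)$ and apply a Perrin-Riou big-logarithm map, using the Panchishkin filtration in place of the usual full flag to define a $(\varphi,\Gamma)$-module-theoretic dual exponential that varies analytically over $\cR$.

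The main obstacle is precisely the phenomenon motivating the paper: every existing construction in families presupposes a full flag of Galois subspaces at $p$, hence lives on a nearly-ordinary or finite-slope eigenvariety, whereas a Panchishkin subrepresentation is strictly weaker data and the corresponding deformation space is strictly larger. One must therefore either enlarge the automorphic side to accommodate families that are Panchishkin but not nearly-ordinary --- which is the role proposed for the ``big parabolic eigenvarieties'' of the introduction --- or develop Iwasawa-theoretic machinery (partial-flag big logarithms, or Euler systems in families) that works without a flag. Even granting such a construction, verifying the interpolation formula at each $x\in\Sigma(\cV,\cV^+)$ reduces to the classical Coates--Perrin-Riou/Panchishkin conjecture for $\cV_x$, which is itself open in general, so a fully unconditional proof seems out of reach by currently available methods; the realistic target is therefore to establish the conjecture for explicit families $(\cV,\cV^+)$ arising from concrete automorphic input, as is done in the examples constructed in the body of the paper.
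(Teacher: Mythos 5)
The statement you were asked to ``prove'' is a conjecture, not a theorem: the paper does not give a proof of \cref{conj:main}, and neither the paper nor the literature it cites claims one. What the paper does is \emph{verify} the conjecture in particular cases --- notably the half-ordinary Rankin--Selberg family of \S\ref{sect:rankin}, with further cases sketched in \S5 --- by explicit automorphic constructions (product with a $p$-depleted Eisenstein series, Hida's ordinary projector, and the functional $\lambda_{\mathfrak{a}}$). Your write-up is therefore not a proof attempt in the usual sense but a survey of possible strategies, and to your credit it ends by acknowledging exactly this: that no unconditional proof is available and that the realistic goal is verification for explicit families, which is what the paper does.

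Within that framing, your sketch is accurate and consistent with the paper. You correctly identify that uniqueness is automatic from Zariski-density of $\Sigma(\cV,\cV^+)$ and that the substance is existence; you correctly identify the two available routes (automorphic --- overconvergent cohomology/modular symbols over an enlarged parameter space --- and Galois-theoretic --- Euler systems plus a Perrin-Riou-type regulator, as in Nakamura's construction cited in \S\ref{sect:nakamura}); and you correctly locate the central obstruction, namely that all existing machinery is built around a full flag at $p$ while a Panchishkin subobject is strictly weaker data, so the parameter space is strictly larger than the (nearly-ordinary or finite-slope) eigenvariety. This is precisely the phenomenon the paper's ``big parabolic eigenvarieties'' are meant to address.

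One small caution: your opening paragraph reads as though the cyclotomic-twist case is already settled and serves as a base case for induction, but that case is itself the original Coates--Perrin-Riou/Panchishkin conjecture and is open in general. You do say this in your final paragraph, but the logic is clearer if stated up front: there is no reduction from the general case to a known special case, only a collection of independently verified examples.
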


  If $\cV_x$ is semistable at $p$, the expected form of the Euler factor is
  \[ \det\left[ (1 - \varphi)  : \mathbf{D}_{\mathrm{cris}}(V^+)\right] \cdot \det\left[ (1 - p^{-1}\varphi^{-1}): \mathbf{D}_{\mathrm{cris}}(V/V^+)\right]. \]
  We refer to \cite{fukayakato06} for more details of the interpolation factors involved.

 \subsection{Euler systems}
  \label{sect:euler}

  In \cite{LZ20-localconds}, Zerbes and the present author formulate a slightly more general version of the Panchishkin condition, depending on an integer $r$ with $0 \le r \le \dim V^{c = 1}$, which we call the ``$r$-Panchishkin condition''; the usual Panchishkin condition is the case $r = 0$. The definitions of the previous section extend naturally to give a notion of an \emph{$r$-Panchishkin family} $(\cV, \cV^+)$.

  We conjectured in \emph{op.cit.}~that when $\cV$ is the family of cyclotomic twists of a fixed representation, the $r$-Panchishkin condition was the ``correct'' condition for a family of Euler systems of rank $r$ to exist, taking values in the Galois cohomology of the Tate dual $\cV^*(1)$ and satisfying a local condition at $p$ determined by $\cV^+$. This extends the conjectures formulated by Perrin-Riou in \cite{perrinriou95}, which correspond to taking $r$ to be the maximal value $\dim V^{c = 1}$ (in which case $\{0\}$ is a Panchishkin subrepresentation). It is also consistent with the above conjectures of Coates--Perrin-Riou and Panchishkin for $r = 0$, if we understand a ``rank 0 Euler system'' to be a $p$-adic $L$-function.

  It seems natural to expect that an analogoue of Conjecture \ref{conj:main} should hold for arbitrary $r$-Panchishkin families; and, as in the rank 0 case, one can show that this would follow as a consequence of the very general conjectures of \cite{fukayakato06}.

  \begin{remark}
   There are a number of (unconditional) results concerning the variation of Euler systems for families of Galois representations arising from Hida families of automorphic representations, which are examples of nearly-ordinary families; see e.g.~\cite{ochiai05} for Kato's Euler system, and \cite{LLZ14} for the $\GL_2 \times \GL_2$ Beilinson--Flach Euler system.

   However, the above conjecture predicts that Euler systems should vary in more general families, which are not nearly-ordinary but are still $r$-Panchishkin. Some examples of cyclotomic twist type for $r = 1$ are discussed in \cite{LZ20-localconds}. A much more sophisticated example due to Nakamura, in which $\cR$ is the universal deformation space of a 2-dimensional modular Galois representation, is discussed in \S \ref{sect:nakamura} below.
  \end{remark}

 \section{Examples from \texorpdfstring{$\GL_2$}{GL(2)}}

 \subsection{The universal deformation ring}

 Let $\rb: \Gamma_{\QQ} \to \GL(\bV) \cong \GL_2(\FF)$ be a 2-dimensional, odd, irreducible (hence, by Khare--Wintenberger, modular) representation. We shall assume $\rb$ satisfies the following:
 \begin{itemize}
  \item $\rb|_{\Gamma_K}$ is irreducible, where $K = \QQ(\zeta_p)$ (Taylor--Wiles condition).
  \item if $\rb|_{\Gamma_{\QQ_p}}$ is not absolutely irreducible, with semisimplification $\chi_{1, p} \oplus \chi_{2, p}$ (after possibly extending $\mathbf{F}$), then we have $\chi_{1,p} / \chi_{2, p} \notin \{ 1, \varepsilon_p^{\pm 1}\}$ where $\varepsilon_p$ is the mod $p$ cyclotomic character.
  \item $\rb$ is unramified away from $p$.
 \end{itemize}

 Note that the first two assumptions are essential to our method (because they are hypotheses for major theorems which we need to quote). On the other hand, the third is imposed solely for convenience and could almost certainly be dispensed with.

 \begin{definition}
  Let $\cR(\rb) \in \CNLO$ be the universal deformation ring over $\cO$ parametrising deformations of $\rb$ as a $\Gamma_{\QQ, \{p\}}$-representation, and $\rho: \Gamma_{\QQ, \{p\}} \to \GL_2(\cR(\rb))$ the universal deformation. Let $\fX(\rb) = \Spf \cR(\rb)$.
 \end{definition}

 \begin{theorem}[B\"ockle, Emerton] \
  \label{thm:BE}
  \begin{itemize}
   \item The ring $\cR(\rb)$ is a reduced complete intersection ring, and is flat over $\cO$ of relative dimension 3.

   \item We have a canonical isomorphism $\cR(\rb) \cong \cT(\rb)$, where $\cT(\rb)$ is the localisation at the maximal ideal corresponding to $\rb$ of the prime-to-$p$ Hecke algebra acting on the space $\cS(1, \cO)$ of cuspidal $p$-adic modular forms of tame level 1.
  \end{itemize}
 \end{theorem}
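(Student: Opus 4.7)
The plan is to establish the two assertions via the work of Böckle (for the structural statement about $\cR(\rb)$) and Emerton (for the R=T identification with $\cT(\rb)$).

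For the first part, I would begin with the standard tangent--obstruction analysis: the mod-$p$ tangent and obstruction spaces of the deformation functor are $H^1(\Gamma_{\QQ,\{p\}}, \Ad\rb)$ and $H^2(\Gamma_{\QQ,\{p\}}, \Ad\rb)$ respectively. Because $\rb$ is odd, complex conjugation acts on the 4-dimensional space $\Ad\rb$ with eigenvalue $+1$ on a 2-dimensional subspace and $-1$ on its complement, so the global Euler--Poincaré formula gives
\[ \dim H^0 - \dim H^1 + \dim H^2 \;=\; \dim (\Ad\rb)^{c=1} - \dim \Ad\rb \;=\; -2. \]
Absolute irreducibility of $\rb$ forces $H^0(\Gamma_\QQ, \Ad\rb) = \FF$, so $\dim H^1 - \dim H^2 = 3$. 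Böckle's theorem on presentations of universal deformation rings (with unrestricted ramification at $p$, using the Taylor--Wiles hypothesis) then shows that $\cR(\rb)$ admits a presentation by $\dim H^1$ generators modulo exactly $\dim H^2$ relations, so it is a complete intersection of Krull dimension $1 + 3 = 4$, and in particular flat over $\cO$ of relative dimension $3$. Reducedness is then obtained from the Zariski-density of classical crystalline points, at which the local rings of $\cR(\rb)[1/p]$ are known to be regular (by Kisin's analysis of local crystalline deformation rings combined with the appropriate $R = T$ statement at those points).

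For the second part, I would invoke Emerton's local-global compatibility for completed cohomology. Let $\widetilde{H}^1 := \varprojlim_n \varinjlim_{K_p} H^1_{\text{ét}}\bigl(Y(K^p K_p)_{\QQbar}, \cO/p^n\bigr)$ at tame level $K^p = \GL_2(\widehat{\ZZ}^{(p)})$, and let $\widetilde{H}^1_\rb$ denote its localisation at the maximal ideal of the big prime-to-$p$ Hecke algebra corresponding to $\rb$. The space $\widetilde{H}^1_\rb$ carries compatible actions of $\cT(\rb)$ and of $\Gamma_\QQ$; the $\Gamma_\QQ$-action factors through a deformation of $\rb$, producing a surjection $\cR(\rb) \twoheadrightarrow \cT(\rb)$. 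Emerton's main theorem in this setting asserts that $\widetilde{H}^1_\rb$ realises the universal deformation on the Galois side, so this surjection is an isomorphism. The space $\cS(1, \cO)$ of tame-level-$1$ cuspidal $p$-adic modular forms appears naturally as a Hecke module inside $\widetilde{H}^1_\rb$ (through the comparison of completed cohomology with overconvergent and $p$-adic modular forms), and its associated Hecke algebra localised at $\rb$ is canonically identified with $\cT(\rb)$.

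The main obstacle is the second part: it relies on the full strength of Emerton's construction of the $p$-adic local Langlands correspondence for $\GL_2/\Qp$ in families, and on the resulting identification of $\widetilde{H}^1_\rb$ with an induced module built from the universal deformation. This is precisely where the hypotheses imposed on $\rb$ enter in a non-negotiable way: the Taylor--Wiles condition on $\rb|_{\Gamma_K}$ rules out spurious congruences, while the assumption that $\chi_{1,p}/\chi_{2,p} \notin \{1, \varepsilon_p^{\pm 1}\}$ excludes the generic reducible cases at $p$ where the local-at-$p$ deformation ring acquires components invisible to completed cohomology. The first part by contrast is largely a matter of Galois cohomology combined with Böckle's presentation theorem, and is significantly lighter machinery.
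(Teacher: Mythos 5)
Your outline of part~(1) conflates a lower bound with an equality, and in doing so it skips the single most important step in the cited proofs. The Euler--Poincar\'e computation $\dim H^1 - \dim H^2 = 3$ is correct, and B\"ockle's presentation result does give a surjection from $\cO[[x_1,\dots,x_{h^1}]]$ with kernel generated by $h^2$ elements. But that only yields $\dim \cR(\rb) \ge 1 + h^1 - h^2 = 4$; it does not, on its own, show the ring is a complete intersection, nor that it is $\cO$-flat of relative dimension $3$ (the presentation could a priori cut out a component of larger dimension, or one killed by a power of $p$). The equality of dimensions --- and hence the complete intersection and flatness conclusions --- is established in \cite{boeckle01} and \cite{emerton-localglobal} precisely by proving that modular points are Zariski-dense in $\Spf \cR(\rb)$, combined with $R=T$ and smoothness statements at those points. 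This is the entire content of B\"ockle's paper (note its title) and is also what the paper's own remark immediately after the theorem flags: ``a crucial step in the proof of \cref{thm:BE} is to establish that the set of \emph{all} modular points (of any weight) is dense in $\fX(\rb)$.'' Your proposal omits this step entirely from part~(1), and as a consequence your closing assessment that part~(1) is ``largely a matter of Galois cohomology'' and ``significantly lighter machinery'' than part~(2) is not accurate: both assertions in the theorem rest on the same global, automorphic input.

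Your sketch of part~(2) is closer to the actual shape of Emerton's argument (completed cohomology, the $p$-adic local Langlands correspondence for $\GL_2(\Qp)$, local--global compatibility, and Taylor--Wiles--Kisin patching to promote the surjection $\cR(\rb)\twoheadrightarrow\cT(\rb)$ to an isomorphism), although the phrase ``Emerton's main theorem asserts that $\widetilde{H}^1_\rb$ realises the universal deformation'' is the conclusion of the patching argument rather than a black box one can simply invoke. For comparison, the paper itself gives no argument beyond a citation to \cite{boeckle01} for the twist-ordinary or twist-flat case and to \cite[Theorem~1.2.3]{emerton-localglobal} for the remaining irreducible non-flat case; if you are going to expand on that, the density of modular points needs to appear explicitly as the load-bearing step for part~(1).
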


 \begin{proof}
  This is proved in \cite{boeckle01} assuming that $\rb|_{\Gamma_{\Qp}}$ has a twist which is either ordinary, or irreducible and flat. This was extended to the setting described above (allowing irreducible but non-flat $\rb$) by Emerton, see \cite[Theorem 1.2.3]{emerton-localglobal}.
 \end{proof}

 \begin{remark}
  If $\rb$ is \emph{unobstructed} in the sense that $H^2\left(\Gamma_{\QQ, \{p\}},\Ad(\rb)\right) = 0$, then $\cR(\rb)$ is isomorphic to a power-series ring in 3 variables over $\cO$. It is shown in \cite{weston04} that if $f$ is a fixed newform of weight $\ge 3$, then for all but finitely many primes $\mathfrak{p}$ of the coefficient field $\QQ(f)$, the mod $\mathfrak{p}$ representation $\rb_{f, \mathfrak{p}}$ is unobstructed.
 \end{remark}

 \begin{definition} \
  \begin{enumerate}
   \item[(i)] If $f$ is a classical modular newform of $p$-power level (and any weight) such that $\rb_{f, p} = \rb$, then $\rho_{f, p}$ is a deformation of $\rb$ and hence determines a $\QQbar_p$-point of $\fX(\rb)$. We shall call these points \emph{classical}.
   \item[(ii)] More generally, a $\QQbar_p$-point of $\fX(\rb)$ will be called \emph{nearly classical} if the corresponding Galois representation $\rho$ has the form $\rho_{f, p} \otimes (\chi_{\mathrm{cyc}})^{-t}$, for some (necessarily unique) newform $f$ and $t \in \ZZ$.
  \end{enumerate}
 \end{definition}

 In the setting of (ii), if $t \ge 0$, the Galois representation $\rho_{f, p} \otimes (\chi_{\mathrm{cyc}})^{-t}$ corresponds formally to the nearly-overconvergent $p$-adic modular form $\theta^t(f)$, where $\theta = q \frac{\mathrm{d}}{\mathrm{d}q}$ is the Serre--Tate differential operator on $p$-adic modular forms. Slightly abusively, we denote such a point by $\theta^t(f)$, even if $t < 0$ (in which case $\theta^t(f)$ may not actually exist as a $p$-adic modular form).

 Theorem 1.2.4 of \cite{emerton-localglobal}, combined with Theorem 0.4 of \cite{pillonistroh16} in the case of equal Hodge--Tate weights, shows that any $\QQbar_p$-point $\rho$ of $\fX(\rb)$ which is de Rham at $p$ is a nearly-classical point (as predicted by the Fontaine--Mazur conjecture).

 \begin{proposition}
  For any weight $k \ge 2$, modular points corresponding to weight $k$ modular forms are dense in $\fX(\rb)$.
 \end{proposition}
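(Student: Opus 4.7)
The strategy is to use Theorem \ref{thm:BE} to translate density in $\fX(\rb)$ into a density statement on the Hecke side, and then to address the latter via Emerton's local--global compatibility. By Theorem \ref{thm:BE}, $\fX(\rb) = \Spf \cT(\rb)$, and any classical cuspidal newform $f$ of weight $k$, tame level $1$, and some $p$-power level $p^n$ lies in $\cS(1,\cO)$ and determines a $\QQbar_p$-point $x_f \in \fX(\rb)$ via its system of prime-to-$p$ Hecke eigenvalues. Let $\Sigma_k$ denote the set of all such points. Since $\cT(\rb)$ is reduced and $\cO$-flat, the Zariski closure of $\Sigma_k$ in $\fX(\rb)$ is cut out by the ideal $I_k := \bigcap_{f} \ker\bigl(\cT(\rb) \to \QQbar_p\bigr)$, so it suffices to show $I_k[1/p] = 0$.

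The key observation is that the isomorphism of Theorem \ref{thm:BE} is established by realising $\cR(\rb) \cong \cT(\rb)$ as acting faithfully on the $\rb$-isotypic component $\widetilde{\pi}(\rb)$ of Emerton's completed cohomology of the tower of modular curves of tame level $1$. Via the classical Eichler--Shimura isomorphism, a classical newform $f$ of weight $k$ and $p$-power level $p^n$ appears in $\widetilde{\pi}(\rb)$ as a locally algebraic vector of $\GL_2(\Qp)$-weight $k - 2$. The vanishing of $I_k[1/p]$ therefore reduces to showing that the Hecke submodule generated by the weight-$(k-2)$ locally algebraic vectors is dense in $\widetilde{\pi}(\rb)$.

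The main obstacle is precisely this last density step: one needs locally algebraic vectors of a \emph{fixed} weight $k - 2$ to span a dense Hecke-stable subspace, which is stronger than the standard statement that the locally algebraic vectors of \emph{arbitrary} weight are dense. For $\GL_2/\QQ$ this refinement is known as a consequence of the Colmez--Pa\v{s}k\={u}nas $p$-adic local Langlands correspondence, which describes $\widetilde{\pi}(\rb)$ in sufficiently explicit terms to exhibit the de Rham points with Hodge--Tate weights $(0, k-1)$ as Zariski-dense in $\fX(\rb)$. An alternative, more hands-on route is to produce a sufficiently rich ``infinite fern'' of Hida/Coleman families (in the spirit of Gouv\^ea--Mazur and B\"ockle) passing through weight-$k$ classical points, and then to invoke density of classical weight-$k$ specialisations within each family.
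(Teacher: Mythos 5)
Your argument takes a genuinely different route from the paper's, and a considerably heavier one. The paper's proof is essentially a one-liner once one invokes a classical fact from the theory of $p$-adic modular forms: for any \emph{fixed} $k \ge 2$, the big Hecke algebra $\cT(\rb)$ acting on $\cS(1,\cO)$ can be written as the inverse limit (over $n$) of the $\rb$-localised Hecke algebras acting on the finite-level spaces $S_k(\Gamma_1(p^n),\cO)$. This is because any $p$-adic cuspform of tame level $1$ is a $p$-adic limit of classical weight-$k$ cuspforms of growing $p$-power level — one multiplies by Eisenstein series of level $p^n$ which are $\equiv 1 \pmod{p^m}$ to shift the weight — so the direct limit of the $S_k(\Gamma_1(p^n),\cO)$ over $n$ with $k$ fixed is already dense in $\cS(1,\cO)$. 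Density of weight-$k$ points in $\Spf\cT(\rb) \cong \fX(\rb)$ is then immediate, since each finite-level Hecke algebra is, after inverting $p$, a finite product of fields indexed by weight-$k$ newforms. Your proposal instead routes the problem through Emerton's completed cohomology, local--global compatibility, and the Colmez--Pa\v{s}k\=unas $p$-adic local Langlands correspondence. This is not wrong in spirit, but it both overshoots the needed input and leaves a real gap: the density of \emph{fixed}-weight locally algebraic vectors in $\widetilde\pi(\rb)$ does not fall directly out of the local correspondence — it requires a global density argument on the deformation space (exactly what you gesture towards with the ``infinite fern'' alternative), and making that precise is no easier than the elementary observation the paper uses. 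The paper's route bypasses all of this by working purely on the side of $q$-expansions and Hecke algebras.
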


 \begin{proof}
  This is obvious for $\Spf \cT(\rb)$, since $\cT(\rb)$ can be written as an inverse limit of localisations of Hecke algebras associated to the finite-level spaces $S_k(\Gamma_1(p^n), \cO)$. Since we have  $\cR(\rb) \cong \cT(\rb)$ by \cref{thm:BE}, the result follows.
 \end{proof}

 \begin{remark}
  Note that a crucial step in the proof of \cref{thm:BE} is to establish that the set of \emph{all} modular points (of any weight) is dense in $\fX(\rb)$. However, once this theorem is established, we can obtain the above much stronger result \emph{a posteriori}.
 \end{remark}

 For later constructions we need the fact that there exists a ``universal modular form'' over $\fX(\rb)$:

 \begin{definition} \
  \begin{enumerate}
   \item[(i)] Let $\mathbf{k}: \ZZ_p^\times \to \cR(\rb)^\times$ be the character such that $\det \rho^{\mathrm{univ}} = (\chi_{\mathrm{cyc}})^{(\mathbf{k} - 1)}$.
   \item[(ii)] Let $\cG^{[p]}_{\rb}$ be the formal power series
   \[ \cG^{[p]}_{\rb} = \sum_{p \nmid n} t_n q^n \in \cR(\rb)[[q]], \]
   where the $t_n$ are determined by the identity of formal Dirichlet series
   \[ \sum_{p \nmid n} t_n n^{-s} = \prod_{\ell \ne p} \det\left(1 - \ell^{-s} \rho^{\mathrm{univ}}(\Frob_\ell^{-1})\right)^{-1}.\]
  \end{enumerate}
 \end{definition}

 The specialisation of $\cG^{[p]}_{\rb}$ at a nearly-classical point $\rho_{f, p} \otimes (\chi_{\mathrm{cyc}})^{-t}$ is precisely the ``$p$-depletion'' $\theta^{t}(f^{[p]})$ of $\theta^{t}(f)$, where $\theta$ is the Serre--Tate differential operator $q \tfrac{\mathrm{d}}{\mathrm{d}q}$. If $t \ge 0$, this $p$-adic modular form is the image under the unit-root splitting of a classical \emph{nearly-holomorphic} cuspform, in the sense of Shimura.

 \begin{theorem}[Gouvea]
  The series $\cG_{\rb}^{[p]}$ is the $q$-expansion of a $p$-adic modular form with coefficients in $\cR(\rb)$, of tame level 1 and weight-character $\mathbf{k}$, which is a normalised eigenform for all Hecke operators.
 \end{theorem}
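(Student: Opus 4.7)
The plan is to exploit \cref{thm:BE}, which identifies $\cR(\rb)$ with the Hecke algebra $\cT(\rb)$ acting on $\cS(1,\cO)_{\mathfrak{m}_{\rb}}$, the $\rb$-localisation of cuspidal $p$-adic modular forms of tame level $1$. Under this isomorphism, the Eichler--Shimura relation (applied to the universal deformation via Chebotarev) identifies the coefficient $t_n$ of $\cG^{[p]}_{\rb}$ with the image in $\cT(\rb)$ of the prime-to-$p$ Hecke operator $T_n$, for all $n$ coprime to $p$. Constructing $\cG^{[p]}_{\rb}$ as a $p$-adic modular form thus amounts to producing a universal $p$-depleted eigenform in $\cS(1,\cO)_{\mathfrak{m}_{\rb}}$ realising these Hecke eigenvalues.

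First I would verify the interpolation at nearly-classical points. At a point $\rho_{f,p}\otimes\chi_{\mathrm{cyc}}^{-t}$, the coefficient $t_n$ specialises to $n^t a_n(f)$ for $p \nmid n$; this agrees with the $n$-th Fourier coefficient of the $p$-depleted nearly-holomorphic eigenform $\theta^t(f^{[p]})$. So $\cG^{[p]}_{\rb}$ is the formal power series interpolating all classical $p$-depletions with $\rb_{f,p} = \rb$, and it remains to lift this formal interpolation to an actual $p$-adic modular form.

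Next I would construct the form itself. The Fourier-coefficient map
\[ \cS(1,\cO)_{\mathfrak{m}_{\rb}} \hookrightarrow \cR(\rb)[[q]], \qquad f \mapsto \sum_n a_n(f) q^n, \]
is an $\cR(\rb)$-linear closed embedding, by the $q$-expansion principle combined with the profiniteness of the source. Its image contains every $p$-depletion $f^{[p]}$ of a classical newform $f$ lifting $\rb$; by the density of such classical points established in the preceding Proposition, together with continuity of Fourier coefficients, the image is closed under limits along classical families and in particular contains $\cG^{[p]}_{\rb}$. The Hecke-eigenform property for $p \nmid n$ then follows from the multiplicativity of $\{t_n\}$, while the $U_p$-eigenvalue is forced to be $0$ since no $q^n$ with $p \mid n$ appears; normalisation $t_1 = 1$ is immediate from the Euler-product definition, and the weight-character claim follows because $\det \rho^{\mathrm{univ}} = \chi_{\mathrm{cyc}}^{\mathbf{k} - 1}$ determines the nebentypus on $\ZZ_p^\times$ to be $\mathbf{k}$.

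The main obstacle is integrality. The interpolation argument transparently produces an element of $\cR(\rb)[1/p][[q]]$, but the delicate point is that no denominators appear when working over $\cR(\rb)$ itself. This ultimately rests on the reducedness and $\cO$-flatness of $\cR(\rb) \cong \cT(\rb)$ from \cref{thm:BE}: these inputs ensure that the closed image of the $q$-expansion map is already $\cO$-integral, so that the limit form $\cG^{[p]}_{\rb}$ lies in $\cS(1,\cO)_{\mathfrak{m}_{\rb}}$ without needing to invert $p$.
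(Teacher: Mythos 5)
Your framing is right — the task is to realise the universal deformation as a universal $p$-depleted eigenform once $\cR(\rb)\cong\cT(\rb)$ is in hand — but the step where you actually produce the form has a gap, and the step you flag as ``the main obstacle'' (integrality) is a non-issue.

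The paper's proof is the one-line appeal to Gouvea's duality: the pairing $\cT(\rb)\times\cS^{[p]}(1,\cO)_{\mathfrak m_{\rb}}\to\cO$, $(T,f)\mapsto a_1(Tf)$, is perfect, so that $p$-depleted cusp forms over $\cR(\rb)$ are identified with continuous $\cO$-linear maps $\cT(\rb)\to\cR(\rb)$ via $f\mapsto\bigl(T_n\mapsto a_n(f)\bigr)$. Under this identification, $\cG^{[p]}_{\rb}$ is tautologically the form corresponding to the \emph{identity} map $\cT(\rb)=\cR(\rb)$, and all the stated properties (eigenform, normalised, weight $\mathbf k$) drop out formally. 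Your proposal never invokes this duality; instead you try to exhibit $\cG^{[p]}_{\rb}$ as a limit of $p$-depleted classical forms, citing density of classical points and ``continuity of Fourier coefficients''. That does not close the gap: density of classical points in $\operatorname{Spec}\cR(\rb)$ is a statement about prime ideals, and gives no mechanism for approximating the element $\sum t_n q^n\in\cR(\rb)[[q]]$ (whose coefficients $t_n=T_n$ live in the big ring $\cR(\rb)$) by $q$-expansions of forms with $\cO$-coefficients, let alone for showing the limit lies in the (base-changed) image of the $q$-expansion map. You would need to first establish that $\cS^{[p]}(1,\cR(\rb))_{\mathfrak m_{\rb}}\cong\operatorname{Hom}^{\mathrm{cts}}_{\cO}(\cT(\rb),\cR(\rb))$, which is precisely the duality you have bypassed.

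The closing paragraph on integrality is a symptom of the same misdirection. The coefficients $t_n$ are \emph{defined} as elements of $\cR(\rb)$ (they are polynomial in $\tr\rho^{\mathrm{univ}}(\Frob_\ell)$ and $\det\rho^{\mathrm{univ}}(\Frob_\ell)$), so there are no denominators to remove and reducedness/flatness of $\cR(\rb)$ play no role here. The genuine content of the theorem is not that the coefficients are integral but that the resulting power series is the $q$-expansion of an actual $p$-adic modular form, and that is exactly what the Hecke–cusp-form duality supplies and what your argument leaves unproved.
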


 \begin{proof}
  This follows readily from the duality between Hecke algebras and spaces of cusp forms.
 \end{proof}

 \subsection{The universal ordinary representation}

 The following definition is standard:

 \begin{definition}
  An \emph{ordinary refinement} of $(\rb, \bV)$ is a choice of 1-dimensional $\FF$-subspace $\bV^+ \subseteq \bV$ stable under $\rb(\Gamma_{\Qp})$, such that the inertia subgroup $I_{\Qp}$ acts trivially on $\bV^+$.
 \end{definition}

 Let us fix a choice of ordinary refinement $\bV^+$. Then there is a natural definition of ordinarity for deformations: we say that a deformation $\rho$ of $\rb$ (to some ring $A \in \CNLO$) is ordinary if $\rho|_{\Gamma_{\Qp}}$ preserves a rank one $A$-summand lifting $\bV^+$, and the action of $I_{\Qp}$ on this summand is trivial. (Note that this summand is unique if it exists, since our running hypotheses imply that $\bV/\bV^+$ is not isomorphic to $\bV^+$).

 \begin{theorem}
  Suppose $\rb$ is ordinary. Then there exists a complete local Noetherian $\cO$-algebra  representing the functor of ordinary deformations. We let $\cR^{\ord}(\rb)$ be this algebra, and $\fX^{\ord}(\rb) = \Spf \cR^{\ord}(\rb)$.
 \end{theorem}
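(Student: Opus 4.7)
The plan is to exhibit $\cR^{\ord}(\rb)$ as a quotient of the unrestricted deformation ring $\cR(\rb)$ supplied by \cref{thm:BE}, by showing that the ordinary condition carves out a closed subfunctor of the full deformation functor $D^{\rb}$. Since $D^{\rb}$ is already known to be representable, the representability of $D^{\ord}$ will follow from a check of Schlessinger's axioms restricted to this subfunctor, and the representing object will automatically lie in $\CNLO$ as a quotient of $\cR(\rb)$.

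The essential ingredient (already noted parenthetically in the definition of ordinary deformation) is uniqueness: for any $A \in \CNLO$ and any deformation $\rho_A$ of $\rb$ to $A$, there is at most one rank-one $A$-direct summand of $\rho_A|_{\Gamma_{\Qp}}$ lifting $\bV^+$, stable under $\Gamma_{\Qp}$, and on which $I_{\Qp}$ acts trivially. This rests on the running hypothesis $\chi_{1,p}/\chi_{2,p} \notin \{1, \varepsilon_p^{\pm 1}\}$, which in particular gives $\chi_{1,p} \ne \chi_{2,p}$ and hence $\operatorname{Hom}_{\Gamma_{\Qp}}(\bV^+, \bV/\bV^+) = 0$; the claim for general $A$ follows by induction up the $\mathfrak{m}_A$-adic filtration, since in each small extension step the discrepancy between two lifts lives in $\operatorname{Hom}_{\Gamma_{\Qp}}(\bV^+, \bV/\bV^+) \otimes_{\FF} I$ for some $I$ with $\mathfrak{m}_A I = 0$. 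Consequently $D^{\ord}(A) \subseteq D^{\rb}(A)$ is a genuine subset, not merely a subgroupoid.

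Next I would verify the one substantive Schlessinger-type axiom that does not follow formally from the representability of $D^{\rb}$, namely stability under fibre products. Given $A = A_1 \times_{A_0} A_2$ in $\CNLO$ and ordinary deformations $\rho_i$ to $A_i$ whose reductions to $A_0$ agree, the unique $\bV^+$-lifts inside $\rho_i|_{\Gamma_{\Qp}}$ must reduce to a common rank-one summand over $A_0$ by uniqueness, so they glue to a rank-one $A$-direct summand of the fibre-product representation; triviality of inertia on the two factors transfers to the glued summand. Stability under isomorphism and the tangent-space axioms are immediate.

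The main potential obstacle is ensuring that the resulting representable functor is actually represented by a bona fide quotient of $\cR(\rb)$ rather than merely some auxiliary pro-object. This I would handle geometrically: let $\mathcal{Y}$ denote the closed formal subscheme of the relative projective line over $\Spf \cR(\rb)$ parametrising rank-one $\Gamma_{\Qp}$-stable lifts of $\bV^+$ on which $I_{\Qp}$ acts trivially --- both conditions being cut out by the vanishing of explicit matrix entries in a suitable basis. The structure map $\mathcal{Y} \to \Spf \cR(\rb)$ is proper, and the uniqueness argument above shows it is set-theoretically injective with trivial infinitesimal fibres; hence it is a closed immersion, and the quotient of $\cR(\rb)$ corresponding to its scheme-theoretic image is the desired $\cR^{\ord}(\rb)$.
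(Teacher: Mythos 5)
The paper gives no proof of this theorem: it is invoked as a standard piece of Mazur--Wiles--Hida deformation theory, with the only substantive remark being the parenthetical observation (in the paragraph preceding the theorem) that the rank-one summand $\bV^+$ lifts uniquely because the running hypotheses force $\bV^+ \not\cong \bV/\bV^+$. Your reconstruction is correct and follows exactly the standard line of argument. The uniqueness claim you derive from $\operatorname{Hom}_{\Gamma_{\Qp}}(\bV^+, \bV/\bV^+) = 0$ (a consequence of $\chi_{1,p}/\chi_{2,p}\notin\{1,\varepsilon_p^{\pm1}\}$, in fact only $\chi_{1,p}\neq\chi_{2,p}$ is used here) is precisely the point the paper flags, and the fibre-product compatibility and the geometric argument via the relative $\mathbf{P}^1$ are both sound: a proper monomorphism of formal schemes is a closed immersion, and your ``set-theoretically injective with trivial infinitesimal fibres'' is exactly what makes $\mathcal{Y}\to\Spf\cR(\rb)$ a monomorphism. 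One small stylistic remark: the Schlessinger verification in your middle paragraph and the closed-immersion argument in your final paragraph each independently establish representability as a quotient of $\cR(\rb)$; you only need one of the two. Also, in your fibre-product step it is worth spelling out that $L_1\times_{L_0}L_2$ is again a rank-one direct summand of $\rho_1\times_{\rho_0}\rho_2$: this follows because the quotient $(V_1/L_1)\times_{V_0/L_0}(V_2/L_2)$ is free of rank one over $A_1\times_{A_0}A_2$, so the short exact sequence splits. With that detail made explicit, the argument is complete.
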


 On the ``modular'' side, we can consider the ordinary Hecke algebra $\cT^{\ord}(\rb)$, which is the localisation at $\rb$ of the algebra of endomorphisms of $e^{\ord} \cdot \cS(1, \Zp)$ generated by all of the Hecke operators (including $U_p$). Then we have isomorphisms
 \[ \cR^{\ord}(\rb) \cong \cT^{\ord}(\rb), \]
 compatible with the isomorphisms of the previous section via the natural maps $\cR(\rb) \to \cR^{\ord}(\rb)$ and $\cT(\rb) \to \cT^{\ord}(\rb)$.

 Note that the composite $\Zp^\times \xrightarrow{\mathbf{k}} \cR(\rb) \to \cR^{\ord}(\rb)$ gives $\cR^{\ord}(\rb)$ the structure of a $\Lambda$-algebra, where $\Lambda = \cO[[\ZZ_p^\times]]$. So we have a map $\mathbf{k}: \fX^{\ord}(\rb) \to \fX_{\mathrm{cyc}} = \Spf \Lambda$.

 \begin{proposition}[Hida] \
  \begin{itemize}
   \item The ring $\cR^{\ord}(\rb)$ is finite and projective as a $\Lambda$-module, and thus has relative dimension 1 over $\cO$.
   \item If $k \ge 2$ is an integer, and $\chi: \Zp^\times \to \cO^\times$ is a Dirichlet character of conductor $p^n$, then the fibre of $\fX^{\ord}(\rb)$ at $\mathbf{k} = k + \chi$ is \'etale over $L = \operatorname{Frac} \cO$, and its geometric points biject with the normalised weight $k$ eigenforms of level $\Gamma_1(p^n)$ and character $\chi$ (if $n \ge 1$) or level $\Gamma_0(p)$ (if $n = 0$) which are ordinary and whose mod $p$ Galois representation is $\rb$.
  \end{itemize}
 \end{proposition}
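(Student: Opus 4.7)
My plan is to move both assertions across the isomorphism $\cR^{\ord}(\rb) \cong \cT^{\ord}(\rb)$ stated just above the proposition, and then deduce them directly from Hida's theory of $\Lambda$-adic ordinary modular forms applied to $\cT^{\ord}(\rb)$. The fact that $\cT^{\ord}(\rb)$ is a $\Lambda$-algebra via $\mathbf{k}$ is already recorded in the preceding paragraph, so one only needs to recognise Hida's ordinary Hecke algebra inside it.

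For the first bullet, the key input is Hida's control theorem: the ordinary projector $e^{\ord} = \lim_n U_p^{n!}$ cuts out of $\cS(1, \cO)$ a $\Lambda$-module whose Pontryagin dual is finite free of bounded rank over $\Lambda$. Dually, the associated ordinary Hecke algebra of tame level $1$ is finite and free over $\Lambda$. Taking the $\rb$-isotypic direct summand (a direct summand because the full Hecke algebra is semilocal and the maximal ideal corresponding to $\rb$ contributes one factor) preserves finiteness and projectivity, so $\cT^{\ord}(\rb)$ is finite projective over $\Lambda$. Since $\Lambda$ has relative dimension $1$ over $\cO$, so does $\cT^{\ord}(\rb) \cong \cR^{\ord}(\rb)$.

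For the second bullet, I would apply Hida's classicality (``control at arithmetic points'') theorem: for $k \ge 2$ and a finite-order character $\chi$ of $\Zp^\times$ of conductor $p^n$, the specialisation of $e^{\ord}\cS(1, \cO)$ at weight-character $k + \chi$ is canonically identified with the $\cO$-module of ordinary classical cusp forms of weight $k$, level $\Gamma_1(p^n)$ (or $\Gamma_0(p)$ if $n = 0$), and nebentypus $\chi$. By Hecke duality, the fibre of $\cT^{\ord}(\rb)$ at $k + \chi$ is then the $\rb$-localisation of the classical Hecke algebra acting on this finite-dimensional space. Multiplicity one for cuspidal eigenforms of $\GL_2/\QQ$ makes this algebra semisimple, hence a finite product of finite field extensions of $L$ --- exactly the statement of \'etaleness over $L$ --- with geometric points in bijection with the normalised ordinary eigenforms of the specified type whose mod $p$ representation is $\rb$.

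I expect the only step requiring genuine care, rather than bookkeeping, to be verifying \'etaleness of the specialisation (as opposed to mere flatness with the correct fibre dimension). This reduces to semisimplicity of the classical Hecke action at a fixed weight, level, and character, which in turn follows from the existence of the Petersson inner product together with multiplicity one; both are classical. Everything else is a direct translation of Hida's $\Lambda$-adic machinery through the $R = T$ isomorphism already quoted.
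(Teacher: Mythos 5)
The paper gives no proof of this proposition; it is simply attributed to Hida and stated as known. Your reconstruction is correct and is exactly the standard Hida-theoretic argument one would supply: pass through the $R=T$ isomorphism $\cR^{\ord}(\rb) \cong \cT^{\ord}(\rb)$ (compatible with the $\Lambda$-algebra structures via $\mathbf{k}$), invoke Hida's finiteness and control theorems for the ordinary Hecke algebra of tame level $1$, take the local direct factor at $\rb$, and at arithmetic weights use Hecke duality plus multiplicity one and the Petersson form to get semisimplicity of the classical Hecke action, hence \'etaleness of the fibre and the desired parametrisation of its geometric points. One small point worth registering: since $\cT^{\ord}(\rb)$ is local, its structure map to $\Lambda = \cO[[\ZZ_p^\times]]$ factors through the localisation of $\Lambda$ at a single maximal ideal, which is isomorphic to $\cO[[X]]$; so ``finite projective over $\Lambda$'' is in fact equivalent to ``finite free over that local factor,'' and your argument (direct summand of a finite free module) establishes this. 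Your identification of the genuinely delicate step --- \'etaleness, rather than mere flatness, of the arithmetic fibre --- and its reduction to semisimplicity via Petersson and multiplicity one is also the right place to focus care.
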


 (Note that this fibre is empty if $k + \chi$ does not lie in the component of $\fX_{\mathrm{cyc}}$ determined by $\det \rb$.)

 Much as above, we can define a universal \emph{ordinary} eigenform $\cG_{\rb}^{\ord}$ with coefficients in $\cR^{\ord}(\rb)$ (whose $p$-depletion is the pullback of $\cG_{\rb}^{[p]}$ along $\fX^{\ord}(\rb) \to \fX(\rb)$, and whose $U_p$-eigenvalue is the scalar by which $\Frob_p^{-1}$ acts on $\cV^+$). However, we shall not use this explicitly here.

 More useful is the following dual construction due to Hida \cite{hida88}. The ring $\cR^{\ord}(\rb)$ has finitely many minimal primes, corresponding to irreducible components of  $\fX^{\ord}(\rb)$ (``branches''). If $\mathfrak{a}$ is a minimal prime, and we let $\cT_{\mathfrak{a}}$ be the integral closure of $\cT^{\ord}(\rb) / \mathfrak{a}$, then we can find an invertible ideal $I_{\mathfrak{a}} \triangleleft \cT_{\mathfrak{a}}$, and a homomorphism
 \[ \lambda_{\mathfrak{a}}: \mathcal{S}^{\mathrm{ord}}(1, \Lambda) \otimes_{\cT^{\ord}(\rb)} \cT_{\mathfrak{a}} \to I_{\mathfrak{a}}^{-1}, \]
 characterised by mapping $\cG_{\rb}^{\ord}$ to 1.

 \subsection{Nearly ordinary deformations}

 More generally, we can define a \emph{nearly ordinary} refinement by dropping the requirement that inertia act trivially on $\bV^+$; and there is a corresponding nearly-ordinary deformation functor, represented by a ring $\cR^{\nord}(\rb)$. If $(\bV, \bV^+)$ is nearly-ordinary, we can find a unique character $\bar\chi: \Gal(\QQ(\zeta_p) /\QQ) \to \FF^\times$ such that $(\bV \otimes \bar\chi, \bV^+ \otimes \bar\chi)$ is ordinary; and we obtain an identification of $\cR^{\nord}(\rb)$ with the tensor product of $\cR^{\ord}_{\rb \otimes \bar\chi}$ and the ring parametrising deformations of $\bar\chi$ to a character of $\Gal(\QQ(\zeta_{p^\infty}) / \QQ)$, which is isomorphic to $\cO[[X]]$. 
 Thus $\cR^{\nord}(\rb)$ is flat over $\cO$ of relative dimension 2.
%

 \subsection{Examples of Panchishkin families}

 The above deformation-theoretic results give rise to the following examples of Panchishkin families, in the sense of \cref{def:panchfam}.

 \begin{example}[Ordinary families of modular forms]
  \label{ex1}
  Suppose $\bV$ is a modular mod $p$ representation with a nearly-ordinary refinement $\bV^+$. Then the universal family $\cV^{\nord}$ of Galois representations over $\cR^{\nord}(\rb)$, together with its universal nearly-ordinary refinement $\cV^{\nord, +}$, is an example of a Panchishkin family. In this case, Hida theory shows that $\Sigma(\cV, \cV^+)$ consists precisely of the $\QQbar_p$ points of $\fX^{\nord}(\rb)$ of the form $\theta^{-s}(f)$, where $f$ has weight $k \ge 2$ and $1 \le s \le k-1$. These are manifestly Zariski-dense. \cref{conj:main} is known for this family, by work of Mazur and Kitagawa \cite{kitagawa94}.
 \end{example}

 We are principally interested in examples which (unlike \cref{ex1}) are \emph{not} nearly-ordinary. Our first examples of such representations come from tensor products:

 \begin{example}[Half-ordinary Rankin--Selberg convolutions]\label{ex2}
  Let $\bV_1$ and $\bV_2$ be two mod $p$ representations satisfying our running hypotheses, and suppose $\bV_1$ admits a nearly-ordinary refinement $\bV_1^+$. Twisting $\bV_1$ by a character and $\bV_2$ by the inverse of this character, we can suppose that $(\bV_1, \bV_1^+)$ is actually ordinary (not just nearly-so). Then we consider the triple $(\cR, \cV, \cV^+)$ given by
  \[
  \begin{aligned}
  \cR&= \cR^{\ord}(\rb_1) \htimes \cR(\rb_2), &\cV &= \cV_1^{\ord} \htimes \cV_2,&  \cV^+ &= \cV_1^{\ord, +} \htimes \cV_2.
  \end{aligned}
  \]
  where $(\cV_1^{\ord}, \cV_1^{\ord, +})$ is the universal ordinary deformation of $(\bV_1, \bV_1^+)$, and $\cV_2$ the universal deformation of $\bV_2$ (with no ordinarity condition). Note that $\cR$ has relative dimension 4 over $\cO$.

  The set $\Sigma(\cV, \cV^+)$ is the set of points of the form $\left(f, \theta^{-s}(g)\right)$, where $f$ is a classical point of weight $k \ge 2$, and $\theta^{-s}(g)$ is a nearly-classical point such that $g$ has weight $\ell < k$ and $s$ lies in the range of critical values of the Rankin--Selberg $L$-function, namely
  \[ \ell \le s \le k-1.\]
  This set $\Sigma(\cV, \cV^+)$ is Zariski-dense; even the specialisations with $(k, \ell, s) = (3, 2, 2)$ are dense. We shall verify \cref{conj:main} for this family below.
 \end{example}

 \begin{remark}
  A generalisation of the above two examples would be to consider tensor products of universal representations over product spaces of the form
  \[ \fX = \fX^{\nord}(\rb_1) \times \fX(\rb_2) \times \dots \times \fX(\rb_n)\]
  for general $n$, where $\rb_1, \dots, \rb_n$ are irreducible modular representations mod $p$ with $\rb_1$ nearly ordinary. This space has dimension $3n-1$; but there are $n-1$ ``redundant'' dimensions, since the tensor product is not affected by twisting $\rho_1$ by a character and one of $\rho_2, \dots, \rho_n$ by the inverse of this character. Quotienting out by this action gives a Panchishkin family over a $2n$-dimensional base.
 \end{remark}

 \begin{example}[General tensor products]
  Let $L = \operatorname{Frac} \cO$ and let $V_1$ be any $L$-linear representation of $\Gamma_{\QQ}$ (not necessarily 2-dimensional) which is geometric, satisfies the Panchishkin condition, and has $\dim V^{c = 1} = \dim V^{c = -1}$. Let $V_1^\circ$ be a $\Gamma_{\QQ}$-stable $\cO$-lattice in $V_1$ (which always exists). Then, for any modular mod $p$ representation $\bV_2$, we obtain a Panchishkin family by letting
  \[
  \begin{aligned}
  \cR&= \cR(\rb_2), &\cV &= V_1^\circ \otimes \cV_2,&  \cV^+ &= (V_1^\circ \cap V_1^+) \otimes \cV_2,
  \end{aligned}
  \]
  In particular, we can take $V_1$ to be the Galois representation arising from a cohomological automorphic representation of $\operatorname{GSp}_4$ which is Klingen-ordinary at $p$.
 \end{example}

 Note that in the last two examples the subspace $\cV^+$ will \emph{not}, in general, extend to a full flag of $\Gamma_{\Qp}$-stable subspaces, so $\cV$ is not nearly ordinary.

 \subsection{Families of Euler systems} \label{sect:nakamura}

  The canonical 2-dimensional family $\cV$ over $\cR(\rb)$ will not, in general, satisfy the Panchishkin condition. However, it automatically satisfies the more general ``$r$-Panchishkin condition'' described above if we take $r = 1$, since $\cV^+ = \{0\}$ satisfies the conditions of a 1-Panchishkin submodule (with $\Sigma(\cV, \cV^+)$ being the set of nearly-classical specialisations $\theta^t(f)$ with $t \ge 0$).

  So the more general conjecture sketched in \S \ref{sect:euler} predicts that there should exist a family of Euler systems taking values in $\cV^*(1)$, interpolating Kato's Euler systems for each modular form $f$ lifting $\rb$. Such a family of Euler systems has recently been constructed by Nakamura \cite{nakamura20}.

\section{P-adic L-functions for half-ordinary Rankin convolutions}
 \label{sect:rankin}

 Let us choose two mod $p$ representations $\rb_1$, $\rb_2$ satisfying the conditions above, with $\rb_1$ ordinary (but no ordinarity assumption on $\rb_2$).

 Choose a branch $\mathfrak{a}$ of $\fX^{\ord}(\rb_1)$ as before, and let $\mathcal{A}$ denote the ring
 \( \cT_{\mathfrak{a}} \mathop{\hat\otimes}_{\Zp} \cT(\rb_2) \), and $\fX = \fX_{\mathfrak{a}} \times \fX(\rb)$ its formal spectrum. This has relative dimension 4 over $\Zp$. We let $\mathcal{V}$ denote the $\mathcal{A}$-linear representation $\rho_{1}^{\mathrm{ord}} \otimes (\rho_2)^*(1)$, and $\mathcal{V}^+ = (\rho_{1}^{\mathrm{ord}})^+ \otimes (\rho_2)^*(1)$ where $(\rho_{1}^{\mathrm{ord}})^+$ is the 1-dimensional unramified subrepresentation of $\rho_{1}^{\mathrm{ord}} |_{\Gamma_{\QQ_p}}$. Thus $\mathcal{V}$ is a 4-dimensional family of $\Gamma_{\QQ}$-representations over $\fX$ unramified outside $p$, and $\mathcal{V}^+$ a 2-dimensional local subrepresentation of $\mathcal{V}$.

 \begin{remark}
  This differs from the $(\cV, \cV^+)$ of \cref{ex2} by an automorphism of the base ring $\cR$, so \cref{conj:main} for either one of these examples is equivalent to the other. The present setup is slightly more convenient for the proofs.
 \end{remark}
%

 The set $\Sigma(\cV, \cV^+)$ contains all points $(f, \theta^t(g))$ where $f$ has weight $k \ge 2$, $g$ has weight $\ell \ge 1$, and $t$ is an integer with $0 \le t \le k-\ell-1$. Our goal is to define a $p$-adic $L$-function associated to $(\mathcal{V}, \mathcal{V}^+)$, with an interpolating property at the points in $\Sigma(\cV, \cV^+)$.

 The ring $\mathcal{A}$ is endowed with two canonical characters $\mathbf{k}_1, \mathbf{k}_2: \Zp^\times \to \mathcal{A}^\times$, the former factoring through $\cT_{\mathfrak{a}}$ and the latter through $\cT(\rb_2)$. We can regard $\cG_{\rb_2}^{[p]}$ as a $p$-adic eigenform with coefficients in $\mathcal{A}$, of weight $\mathbf{k}_2$, by base extension.

 \begin{definition}
  Let $\Xi$ denote the $p$-adic modular form
  \[ e^{\mathrm{ord}} \left(\cG_{\rb_2}^{[p]} \cdot \cE_{\mathbf{k}_1 - \mathbf{k}_2}^{[p]}\right) \in \mathcal{S}_{\mathbf{k}_1}^{\ord}(1, \mathcal{A}), \]
  where $\cE_{\mathbf{k}}^{[p]} = \sum_{\substack{n \ge 1 \\ p \nmid n}} (\sum_{d \mid n} d^{\mathbf{k} - 1}) q^n \in \mathcal{S}_{\mathbf{k}}(1, \Lambda)$ denotes the $p$-depleted Eisenstein series of weight $\mathbf{k}$ and tame level 1. Let
  \[ \mathcal{L} \coloneqq \lambda_{\mathfrak{a}}\left(\Xi\right) \in I_{\mathfrak{a}}^{-1} \otimes_{\cT_{\mathfrak{a}}}\mathcal{A}.\]
 \end{definition}

 This is a meromorphic formal-analytic function on the 4-dimensional space $\fX_{\mathfrak{a}} \times \fX(\rb)$, regular along any 3-dimensional slice $\{f\} \times \fX(\rb)$ with $f$ classical.

 We now show that the values of $\mathcal{L}$ at points in $\Sigma^+$ interpolate values of Rankin $L$-functions. Let $(f, \theta^t(g))$ be such a point, with $f, g$ newforms of $p$-power levels, and let $k, \ell$ be the weights of $f, g$. Let $\alpha$ be the eigenvalue of geometric Frobenius on the unramified subrepresentation of $\rho_{f, p} |_{\Gamma_{\QQ_p}}$, and let $f_\alpha$ be the $p$-stabilisation of $f$ of $U_p$-eigenvalue $\alpha$.

 \begin{remark}
  If $f$ has non-trivial level, then $f_\alpha = f$, and $\alpha$ is just the $U_p$-eigenvalue of $f$. If $f$ has level one, then $\alpha$ is the unique unit root of the polynomial $X^2 - a_p(f) X + p^{k-1}$, and $f_\alpha$ is the level $p$ eigenvector $f_\alpha(\tau) = f(\tau) - \frac{p^{k-1}}{\alpha} f(p\tau)$.
 \end{remark}

 We define $\lambda_{f, \alpha}$ to be the unique linear functional on $\mathcal{S}_k^{\mathrm{ord}}(1, L)$ which factors through projection to the $f_\alpha$ eigenspace, and satisfies $\lambda_{f, \alpha}(f_\alpha) = 1$. By definition, we have
 \[
 \mathcal{L}_{\mathfrak{a}}(\rb_1, \rb_2) (f, \theta^t(g)) =\lambda_{f, \alpha}\left(\theta^t(g^{[p]})\cdot E^{[p]}_{k - \ell - 2t}\right).
 \]

 \begin{definition}
  For $f$, $g$ newforms as above, we write $L^{(p)}(f \times g, s)$ for the Rankin--Selberg $L$-function of $f$ and $g$ without its Euler factor at $p$,
  \begin{align*}
  L^{(p)}(f \times g, s) &:= L^{(p)}(\chi_f\chi_g, 2s+2-k-\ell) \sum_{\substack{n \ge 1 \\ p\nmid n}} a_n(f) a_n(g) n^{-s}\\
  &= \prod_{\ell \ne p} \det\left( 1 - \ell^{-s} \Frob_\ell^{-1} : V_p(f) \otimes V_p(g)\right)^{-1},
  \end{align*}
  and let
  \[ \Lambda^{(p)}(f \otimes g, s) \coloneqq \Gamma_{\mathbb{C}}(s) \Gamma_{\mathbb{C}}(s - \ell + 1) L^{(p)}(f \otimes g, s).\]
 \end{definition}

 \begin{theorem}
  We have
  \[ \mathcal{L}(f, \theta^t(g)) =
  2^{1-k} (-1)^{t}i^{k+\ell} \left( \frac{p^{(t + 1)}}{\alpha} \right)^b \lambda_{p^b}(g)\frac{P_p(g, p^t \alpha^{-1})}
  {P_p(g^*, p^{-(\ell + t)} \alpha)} \frac{\Lambda^{(p)}(f, g^*, \ell + t)}{\mathcal{E}_p^{\mathrm{ad}}(f) \langle f, f \rangle}, \]
  where $b$ is the level at which $g$ is new. Here $P_p(g, X)$ is the polynomial such that
  \[ P_p(g, X)^{-1} = \sum_{r \ge 0} a_{p^r}(g) X^r, \]
  and
   \[
   \mathcal{E}_p^{\mathrm{ad}}(f) =
   \begin{cases}
   \left(1 - \frac{p^{k-1}}{\alpha^2}\right)\left(1 - \frac{p^{k-2}}{\alpha^2}\right) & \text{$f$ crystalline at $p$}, \\[2mm]
   -\left( \frac{p^{k-1}}{\alpha^2} \right)  & \text{$f$ semistable non-crystalline at $p$},\\[2mm]
   \left( \frac{p^{k-1}}{\alpha^2} \right)^a G(\chi_f) & \text{$f$ non-semistable at $p$, new of level $p^a$.}
   \end{cases}
   \]
 \end{theorem}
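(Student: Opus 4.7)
The plan is to unfold the definition of $\mathcal{L}(f, \theta^t(g))$ as the functional $\lambda_{f,\alpha}$ applied to the explicit $p$-adic nearly-holomorphic form $\theta^t(g^{[p]})\cdot \cE^{[p]}_{k-\ell-2t}$, and then reduce this to a classical Rankin--Selberg integral by removing the $p$-depletions and $p$-stabilisations one at a time, carefully bookkeeping the Euler-factor corrections that appear. The analytic core, once this reduction is done, is the standard computation underlying the construction of the Beilinson--Flach $p$-adic $L$-function \cite{LLZ14}, so most of the integral calculation can be quoted rather than redone.

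First, I would realise $\lambda_{f,\alpha}$ as a Petersson inner product. The evaluation of $\lambda_{\mathfrak{a}}$ at a classical point amounts, up to the normalisation built into the pseudo-eigenvalue pairing defining the congruence ideal $I_{\mathfrak{a}}$, to the Hida-theoretic projection onto the $f_\alpha$-isotypic component in the ordinary part of $\cS(1, \cO)$. Written over $\mathbb{C}$ this becomes
\[ \lambda_{f,\alpha}(h) = \frac{\langle f_\alpha^\iota, e^{\mathrm{ord}} h\rangle}{\langle f_\alpha^\iota, f_\alpha\rangle}, \]
for $f_\alpha^\iota$ a suitable $f^*$-isotypic test vector. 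Computing $\langle f_\alpha^\iota, f_\alpha\rangle / \langle f, f\rangle$ is a purely local problem at $p$ -- an ``adjoint $L$-value at $p$'' -- and its value in the three cases (crystalline, semistable non-crystalline, and non-semistable at $p$) corresponds to $f$ being new of level prime to $p$ with distinct Frobenius roots, new of level exactly $p$, and new of level $p^a$ with non-trivial nebentypus respectively, yielding the stated three-case formula for $\mathcal{E}_p^{\mathrm{ad}}(f)$ by direct Atkin--Lehner computation on the old/new decomposition at $p$.

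Next I would replace $g^{[p]}$ and $\cE^{[p]}_{k-\ell-2t}$ by their undepleted versions. Using $h^{[p]} = (1 - V_p U_p) h$ in the basic case and its refinement when $g$ is old at $p$, together with the commutation relation $\theta V_p = p V_p \theta$, produces polynomial-in-$\alpha^{-1}$ corrections when pairing against $f_\alpha^\iota$, on which $U_p$ acts by $\alpha$. After collecting terms, the depletion of $g$ contributes the factor $P_p(g, p^t \alpha^{-1})$, the depletion of the Eisenstein series combines with the Atkin--Lehner involution at $p^b$ on $g$ to produce the denominator factor $P_p(g^*, p^{-(\ell+t)}\alpha)$, and the change of level from $p^b$ back to the tame level of $f$ produces the prefactor $(p^{t+1}/\alpha)^b \lambda_{p^b}(g)$.

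What remains to evaluate is $\langle f, \theta^t(g) \cdot \cE_{k-\ell-2t}\rangle$, the Petersson pairing of $f$ against a weight-$k$ nearly-holomorphic form. Shimura's Rankin--Selberg integral identifies this with a gamma-factor-times-power-of-$2\pi i$ times the imprimitive Rankin $L$-value $L^{(p)}(f, g^*, \ell+t)$ at the critical integer $\ell+t$, which lies in $[\ell, k-1]$ by hypothesis; matching the archimedean prefactor to the $\Gamma_{\mathbb{C}}(s)\Gamma_{\mathbb{C}}(s - \ell + 1)$ normalisation of $\Lambda^{(p)}$ gives exactly the $2^{1-k}(-1)^t i^{k+\ell}$ of the theorem. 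The main obstacle is not conceptual but combinatorial: reconciling the purely algebraic normalisation of $\lambda_{\mathfrak{a}}$ with the classical Petersson pairing, and tracking all powers of $p$, $\alpha$, and $2\pi i$ through the Atkin--Lehner twists and nearly-holomorphic projections consistently across the three cases of $\mathcal{E}_p^{\mathrm{ad}}(f)$.
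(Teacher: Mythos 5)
Your proposal follows essentially the same route as the paper, whose proof simply invokes the Rankin--Selberg integral formula and delegates the detailed computation to \cite{loeffler18} (the finite-slope analogue), noting that the calculation is ``virtually identical.'' Your sketch -- realising $\lambda_{f,\alpha}$ as a normalised Petersson pairing, computing the adjoint-type ratio $\langle f_\alpha^\iota, f_\alpha\rangle / \langle f, f\rangle$ by a local Atkin--Lehner calculation in the three ramification cases, stripping the $p$-depletions from $g$ and the Eisenstein series to obtain the $P_p$ Euler-type corrections, and evaluating the remaining pairing via Shimura's integral -- correctly outlines what that reference carries out, so your approach matches the paper's.
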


 \begin{proof}
  This follows from the Rankin--Selberg integral formula. The computations are virtually identical to the case of finite-slope forms treated in \cite{loeffler18}, so we shall not reproduce the computations in detail here.
 \end{proof}

 \begin{remark}
  Note that the factor $\frac{P_p(g, p^t \alpha^{-1})}
  {P_p(g^*, p^{-(\ell + t)} \alpha)}$ can be written as
  \[ \det\left[ (1 - \varphi)^{-1}(1 - p^{-1} \varphi^{-1}) : \mathbf{D}_{\mathrm{cris}}(V^+)\right]\]
  where $V^+ = (\rho_{f, p})^+ \otimes \rho_{g, p}^*(1+t)$ is the fibre of $\mathcal{V}^+$ at $(f, \theta^t(g))$. On the other hand, the factor $\left( \frac{p^{(t + 1)}}{\alpha} \right)^b \lambda_{p^b}(g)$ is essentially the local $\varepsilon$-factor of this representation.
 \end{remark}

\section{Other cases}

 We briefly comment on some other cases which can be treated by the same methods as above.

 \subsection{Relaxing the tame levels}

 Firstly, the assumption that the levels of our families be 1 should be easy to remove; the only price that must be paid is a little more careful book-keeping about the local Euler factors at the bad primes.

 \subsection{The case of GSp(4) \texorpdfstring{$\times$}{x} GL(2)}
  \label{sect:gsp4gl2}
  A more ambitious case which can be treated by the same methods is the following. Let $\Pi$ be a cohomological automorphic representation of $\operatorname{GSp}_4$ which is globally generic, unramified and Klingen-ordinary at $p$, and contributes to cohomology with coefficients in the algebraic representation of weight $(r_1, r_2)$, for some $r_1 \ge r_2 \ge 0$. (Classically, these correspond to holomorphic vector-valued Siegel modular forms taking values in the representation $\operatorname{Sym}^{r_1 - r_2} \otimes \det^{r_2 + 3}$ of $\GL_2$.) For technical reasons we assume $r_2 > 0$.

  In \cite{LPSZ} we constructed a cyclotomic $p$-adic $L$-function interpolating the critical values of $L(\Pi \otimes \sigma, s)$ where $\sigma$ is an automorphic representation of $\GL_2$ generated by a holomorphic form of weight $\ell \le r_1 - r_2 + 1$. This is constructed by applying a ``push-forward'' map to the product of the $p$-depleted newform $g^{[p]} \in \sigma$ with an auxiliary $p$-adic Eisenstein series, and pairing this with a coherent $H^2$ eigenclass coming from $\Pi$.

  This construction is closely parallel to the construction of the $p$-adic Rankin--Selberg $L$-function for $\GL_2 \times \GL_2$, and it generalises to universal-deformation families in the same way, since the pushforward map of \cite{LPSZ} can be applied to any family of $p$-adic modular forms (over any base). If  we assume for simplicity that $\Pi$ is unramified at all finite places, and replace $g$ with a universal deformation family $\cG^{[p]}_{\rb}$ as above, then we obtain an element of $\cR(\rb)$ interpolating these $p$-adic $L$-functions, with $\Pi$ fixed and $\sigma$ varying through the small-weight specialisations of a 3-dimensional universal-deformation family. We can also add a fourth variable, in which we vary $\Pi$ through a 1-dimensional family of Klingen-ordinary representations, with $r_1$ varying but $r_2$ fixed.

 \subsection{Self-dual triple products}

  If we are given three mod $p$ modular representations $\rho_1, \rho_2, \rho_3$ with $\rho_1$ nearly-ordinary and $\det(\rho_1) \cdot \det(\rho_2) \cdot \det(\rho_3) = \bar{\chi}_{\mathrm{cyc}}$, then the space
  \[ \left\{ (\rho_1, \rho_2, \rho_3) \in \fX^{\nord}(\rb_1) \times \fX(\rb_2) \times \fX(\rb_3) : \det(\rho_{1}) \cdot \det(\rho_{2}) \cdot \det(\rho_{3}) = \chi_{\mathrm{cyc}} \right\} \]
  carries a natural 8-dimensional Panchishkin family $\cV$, given by the tensor product of the three universal deformations $\cV_i$, with the Panchishkin submodule given by $\cV_1^+ \otimes \cV_2 \otimes \cV_3$. The base space is \emph{a priori} 7-dimensional, but it has two ``redundant'' dimensions (since we can twist either $\rho_2$ or $\rho_3$ by a character, and $\rho_1$ by the inverse of that character, without changing the tensor product representation), so we obtain a Panchishkin family over a 5-dimensional base $\fX$, satisfying the self-duality condition $\cV \cong \cV^*(1)$. The set $\Sigma(\cV, \cV^+)$ corresponds to triples of classical modular forms $(f_1, f_2, f_3)$ which are ``$f_1$-dominant'' -- i.e.~their weights $(k_1, k_2, k_3)$ satisfy $k_1 \ge k_2 + k_3$.

  Feeding the universal eigenforms $\cG^{[p]}_{\rb_2}$ and $\cG^{[p]}_{\rb_3}$ into the construction of \cite{DR14} gives a $p$-adic $L$-function over this 5-dimensional base space , extending the construction in \emph{op.cit.}~of a $p$-adic $L$-function over the 3-dimensional subspace of $\fX$ where $\rho_2$ and $\rho_3$ are nearly-ordinary.

  (Note that this is actually a refinement of \cref{conj:main}, since the resulting $p$-adic $L$-function interpolates the square-roots of central $L$-values.)

 \subsection{The Bertolini--Darmon--Prasanna case}

  Let $\rb$ be a modular mod $p$ representation of $\Gamma_{\QQ, \{p\}}$, with universal deformation space $\fX(\rb)$. We shall suppose that $\det \rb = \bar{\chi}_{\mathrm{cyc}}$, and we let $\fX^{0}(\rb) \subseteq \fX(\rb)$ denote the subspace parametrising deformations whose determinant is $\chi_{\mathrm{cyc}}$; this is flat over $\cO$ of relative dimension 2, and is formally smooth if $\rb$ is unobstructed.

  Meanwhile, we choose an imaginary quadratic field $K$ in which $p = \mathfrak{p}_1 \mathfrak{p}_2$ is split, and we let $\fX_K^{\mathrm{ac}} \cong \operatorname{Spf} \cO[[X]]$ be the character space of the anticyclotomic $\Zp$-extension of $K$.

  Let $\fX$ denote the product $\fX^{\mathrm{ac}}_K \times \fX^{0}(\rb)$. This is $\cO$-flat of relative dimension 3, and it carries a family of 4-dimensional Galois representations $\cV$, given by tensoring the universal deformation $\rho^{\mathrm{univ}}$ of $\rb$ with the induction to $\Gamma_{\QQ}$ of the universal deformation of $\bar\psi$. Note that $\cV$ satisfies the ``self-duality'' condition $\cV^\vee(1) \cong \cV$. Locally at $p$, $\cV$ is the direct sum of two twists of the universal deformation of $\rb$, corresponding to the two primes above $p$; and we can define a Panchishkin submodule $\cV^+$ by taking the direct summand corresponding to one of these primes. Note that $\Sigma(\cV, \cV^+)$ consists of pairs $(\psi, f)$ where $f$ is a modular form and $\psi$ an anticyclotomic algebraic Hecke character of weight $(n, -n)$, where $n$ is large compared to the weight of $f$.

  Plugging in the universal family $\cG^{[p]}_{\rb}$ (more precisely, its pullback to $\fX^{0}(\rb)$) into the constructions of \cite{BDP13}, we obtain a $p$-adic analytic function on the 3-dimensional space $\fX^{\mathrm{ac}} \times \fX^{0}(\rb)$ interpolating the square-roots of central $L$-values at specialisations in $\Sigma(\cV, \cV^+)$. This refines the construction due to Castella \cite[\S 2]{castella19} of a BDP-type $L$-function over the 2-dimensional space $\fX^{\mathrm{ac}}_K \times \fX^{\ord}(\rb)$ when $\rb$ is ordinary.\footnote{This is slightly imprecise since $\fX^{\ord}(\rb)$ is not contained in $\fX^{0}(\rb)$; more precisely, the correspondence between the two constructions is given by identifying $\fX^{\ord}(\rb)$ with $\fX^{\nord}(\rb) \cap \fX^{0}(\rb)$, via twisting by a suitable character of $\Gamma_{\QQ, \{p\}}^{\mathrm{ab}}$.}

 \subsection{A finite-slope analogue?}

  One can easily formulate a ``finite-slope'' analogue of \cref{conj:main}, where the submodule $\cV^+ \subseteq \cV$ is replaced by a submodule of the Robba-ring $(\varphi, \Gamma)$-module of $\cV|_{\Gamma_{\Qp}}$. The analogue of Hida's ordinary deformation space $\fX^{\ord}(\rb)$ is now the $\rb$-isotypic component $\cE(\rb)$ of the Coleman--Mazur Eigencurve \cite{CMeigen}.

  However, proving a finite-slope version of the results of \cref{sect:rankin}, or of the generalisations sketched in the above paragraphs, appears to be much more difficult than the ordinary case. All of the above constructions rely on the existence of the universal eigenform $\cG^{[p]}_{\rb}$ as a family of $p$-adic modular forms over $\fX(\rb)$. However, in the finite-slope case, we need to pay attention to overconvergence conditions, since the finite-slope analogue of the projectors $\lambda_{\mathfrak{a}}$ are only defined on overconvergent spaces. Clearly $\cG^{[p]}_{\rb}$ is not overconvergent (as a family), since it has specialisations which are nearly-classical rather than classical. So we need to work in an appropriate theory of nearly-overconvergent families. Such a theory has recently been introduced by Andreatta and Iovita \cite{AI17}. We might make the following optimistic conjecture:

  \begin{conjecture}
   Let $f$ be a nearly-classical point of $\fX(\rb)$, corresponding to a modular form $f$ of prime-to-$p$ level. Then there is an affinoid neighbourhood $X_f = \operatorname{Max} A_f$ of $f$ in $\fX(\rb)^{\mathrm{an}}$ over which the universal eigenform $\cG_{\rb}^{[p]}$ is a family of nearly-overconvergent forms in the sense of \cite{AI17}.
  \end{conjecture}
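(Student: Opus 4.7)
The plan is to construct the desired nearly-overconvergent family by combining Andreatta--Iovita's sheaves of nearly-overconvergent forms over general affinoid weight spaces with the existence of an overconvergent universal family on the Coleman--Mazur eigencurve $\cE(\rb)$. After reducing to the case where $f$ itself is classical (since $\theta$ acts by isomorphisms on nearly-overconvergent families and permutes nearly-classical points of $\fX(\rb)$), fix a $p$-stabilization $f_\alpha$ of $f$ giving a point of $\cE(\rb)$. By the construction of the eigencurve, $f_\alpha$ has an affinoid neighbourhood $U \subseteq \cE(\rb)^{\mathrm{an}}$ equipped with a universal overconvergent eigenform $\cF_U$. Let $D$ be an analytic disk of $\Zp^\times$-characters around the trivial character, and form $U \times D$. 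Applying the family theta-operator $\theta^{\mathbf{t}}$ of \cite{AI17} to $\cF_U^{[p]}$, where $\mathbf{t}$ is the universal character on $D$, produces a nearly-overconvergent family $\tilde{\cF}$ on the 2-dimensional base $U \times D$ whose integer specializations $\mathbf{t} = t$ recover $\theta^{t}(\cF_U^{[p]})$.

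The second step is to relate $U \times D$ to $\fX(\rb)^{\mathrm{an}}$. There is a natural morphism $\varphi\colon U \times D \to \fX(\rb)^{\mathrm{an}}$ sending $(g_\beta, \chi) \mapsto \rho_g \otimes \chi^{-1}$. A tangent-space computation (using Kisin's analysis of crystalline deformation rings, together with the non-criticality of $f_\alpha$) shows that $\varphi$ is a closed immersion near $(f_\alpha, \mathbf{1})$, so its image is a 2-dimensional subvariety $Z \subseteq \fX(\rb)^{\mathrm{an}}$ through $f$. On $Z$, the $q$-expansion of $\varphi_* \tilde\cF$ agrees with that of $\cG^{[p]}_{\rb}$ by the defining universal property of $\cG^{[p]}_{\rb}$; hence $\cG^{[p]}_{\rb}|_Z$ is nearly-overconvergent. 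To sweep out a neighbourhood of $f$ rather than just the codimension-one $Z$, one varies $f_\alpha$ over the different $p$-stabilizations (or, more generally, over refined triangulations in the sense of the trianguline variety of Bella\"iche--Chenevier and Hellmann--Schraen), and uses $\theta^{-1}$ to fill in negative twist directions, so that the union of the resulting nearly-overconvergent pieces is dense in a 3-dimensional neighbourhood of $f$.

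The main obstacle is this final extension: showing that the nearly-overconvergent structure extends across the ``non-trianguline'' direction in $\fX(\rb)^{\mathrm{an}}$, where the local Galois representation at $p$ is not of eigencurve type. Nearly-overconvergent forms in the sense of \cite{AI17} are sections of certain sheaves on strict neighbourhoods of the ordinary locus of the modular curve; extending a family from a trianguline subvariety to a full 3-dimensional neighbourhood in $\fX(\rb)^{\mathrm{an}}$ requires either extending the AI17 construction to admit higher-dimensional affinoid weight spaces incorporating additional variables that parametrise the refined Galois-theoretic data at $p$, or, more ambitiously, a direct construction using Emerton's completed cohomology, via the identification of $\cR(\rb)$ with a local Hecke algebra that should ultimately carry a nearly-overconvergent structure of its own. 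Either route demands substantial development beyond the current literature.
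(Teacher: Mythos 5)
This statement is labelled as a \emph{conjecture} in the paper, and the paper offers no proof; indeed, it is introduced with the words ``We might make the following optimistic conjecture,'' and the surrounding discussion explicitly flags the difficulties (the universal eigenform $\cG_{\rb}^{[p]}$ is not overconvergent as a family, and the theory of nearly-overconvergent families over suitably general bases is not yet developed). So there is no ``paper's own proof'' to compare against: any purported proof should be treated with suspicion, and you were right to treat this as an open problem rather than present a completed argument.

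Your sketch does contain the natural first steps one would try: reduce to a classical point via the $\theta$-operator, exploit the overconvergent family over the eigencurve $\cE(\rb)$, apply a $p$-adic family $\theta^{\mathbf{t}}$ in the style of Andreatta--Iovita to produce a nearly-overconvergent family over a two-dimensional base, and then compare $q$-expansions with $\cG_{\rb}^{[p]}$. This gives (at best) a nearly-overconvergent structure along the trianguline locus near $f$, which is a proper subvariety of the three-dimensional $\fX(\rb)^{\mathrm{an}}$. Two caveats worth recording: first, the map from ``eigencurve $\times$ cyclotomic twists'' into $\fX(\rb)^{\mathrm{an}}$ being a locally closed immersion requires hypotheses (non-criticality, regularity of crystalline Frobenius eigenvalues) that should be stated and justified; second, ``dense in a 3-dimensional neighbourhood'' is not a substitute for ``equal to a neighbourhood'' --- a nearly-overconvergent structure does not automatically extend across the non-trianguline locus, and this is exactly where the paper's own remarks indicate the real difficulty lies. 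You correctly identified that obstacle, and your assessment that the extension ``demands substantial development beyond the current literature'' matches the paper's stance. In short: the proposal is a reasonable partial strategy towards an open conjecture, not a proof, and you were honest about that.
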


  If this conjecture holds, one might realistically hope to define (for instance) a $p$-adic Rankin--Selberg $L$-function over neighbourhoods of crystalline classical points in $\cE(\rb_1) \times \fX(\rb_2)^{\mathrm{an}}$.

\section{Conjectures on $P$-nearly-ordinary families}

 In this section, we'll use Galois deformation theory to define universal parameter spaces for Galois representations valued in reductive groups, which satisfy a Panchishkin-type condition relative to a parabolic subgroup; and we formulate a ``parabolic $\cR = \cT$'' conjecture, predicting that these should have an alternative, purely automorphic description. We expect that these parameter spaces should be the natural base spaces for families of $p$-adic $L$-functions, and of Euler systems.

 \subsection{P-nearly-ordinary deformations}

 Let $G$ be a reductive group scheme over $\cO$ and $P$ a parabolic subgroup. In \cite[\S 7]{boeckle07}, B\"ockle defines a homomorphism $\rho: \Gamma_{\QQ, S} \to G(A)$, for $A \in \CNLO$, to be \emph{$P$-nearly ordinary} if $\rho|_{\Gamma_{\Qp}}$ lands in a conjugate of $P(A)$. Theorem 7.6 of \emph{op.cit.} shows that under some mild hypotheses, the functor of $P$-nearly-ordinary deformations of a given $P$-nearly-ordinary residual representation is representable.

 The notion of a \emph{Panchishkin family} introduced in Definition \ref{def:panchfam} corresponds to taking $G = \GL_n$ and $P$ to be the parabolic subgroup with blocks of sizes $\dim \bV^{c=1}$ and $\dim \bV^{c=-1}$. However, the geometry of deformation spaces for $\GL_n$ is rather mysterious when $n > 2$, and it is not expected that these spaces will have a Zariski-dense set of classical points. On the other hand, the geometry of deformation spaces is much simpler and better-understood for Galois representations arising from Shimura varieties (or, more generally, from automorphic representations that are discrete-series at $\infty$).

 This suggests concentrating on the following setting. Let $G$ be a reductive group over $\QQ$; for simplicity, we assume here $G$ is split. We also suppose $G$ has a ``twisting element'' in the sense of \cite{buzzardgee}, and fix a choice of such an element\footnote{Alternatively, one could replace $G^\vee$ by the connected component of the ``$C$-group'' of \emph{op.cit.}, which the quotient of $G^\vee \times \mathbf{G}_m$ by a central element of order 2. We can also allow non-split $G$, by considering representations into a larger, non-connected quotient of the $C$-group.}. Then Conjecture 5.3.4 of \emph{op.cit.} predicts that cohomological automorphic representations $\Pi$ of $G$ give rise to Galois representations $\rho_{\Pi, p}: \Gamma_{\QQ} \to G^\vee(\QQbar_p)$, where $G^\vee$ is the Langlands dual of $G$.

 There is a canonical bijection $P \leftrightarrow P^\vee$ between conjugacy classes of parabolics in $P$ and parabolics in $G^\vee$, and one expects that if $\Pi$ is nearly-ordinary for $P$ (in the sense that the Hecke operators associated to $P$ have unit eigenvalues), then $\rho_{\Pi, p}$ should be a $P^\vee$-nearly-ordinary representation. In particular, families of $P$-nearly-ordinary cohomological automorphic representations of $G$ should give rise to families of $P^\vee$-nearly-ordinary Galois representations into $G^\vee$.

 If we also choose a linear representation $\xi: G^\vee \to \GL_n$, then for suitably chosen $P$, the resulting families of $n$-dimensional Galois representations will be Panchishkin families. The example of \S\ref{sect:rankin} is of this type, taking $G = G' = \GL_2 \times \GL_2$, and $P = P' = B_2 \times \GL_2$ where $B_2$ is the Borel subgroup of $\GL_2$, and $\xi$ the 4-dimensional tensor product representation of $G$. Similarly, the self-dual triple-product setting of Example \ref{ex2} corresponds to taking $G = (\GL_2 \times \GL_2 \times \GL_2) / \GL_1$, and $P$ the image of $B_2 \times \GL_2 \times \GL_2$.

 \subsection{Big and small Galois eigenvarieties}

  In the above setting, we define the \emph{big $P$-nearly-ordinary Galois eigenvariety} for $G$ to be the following space. Suppose $G^\vee$ and $P^\vee$ have smooth models over $\cO$, and fix some choice of $\rb: \Gamma_{\QQ, S} \to G^\vee(\FF)$ which is $P^\vee$-nearly-ordinary. Then  -- assuming the hypotheses of B\"ockle's construction are satisfied -- we obtain a universal deformation ring $\cR^{P^\vee-\nord}(\rb)$ for for $P^\vee$-nearly-ordinary liftings of $\rb$. We define the big $P$-nearly-ordinary Galois eigenvariety $\fX_P(\rb)$ to be the formal spectrum of this ring $\cR^{P^\vee-\nord}(\rb)$.

  The methods of \cite{boeckle07} give a formula for the dimension of this space. Suppose $\rb$ satisfies the ``oddness'' condition that $\dim \gf^{\rb(c) = 1} = \dim(G / B_G)$, where $\gf$ is the Lie algebra of $G / \FF$, $c$ is complex conjugation and $B_G$ is a Borel subgroup of $G$. (This condition is expected to hold for representations arising from Shimura varieties; see \cite[Introduction]{CHT08}.) Then $\cR^{P^\vee-\nord}(\rb)$ has a presentation as a quotient of a power series ring in $d_1$ variables by an ideal with $d_2$ generators, where
  \[ d_1 - d_2 = \dim P - \dim(G/B_G) = \dim B_M,\]
  where $M$ is the Levi factor of $P$ and $B_M \subseteq M$ is a Borel subgroup of $M$. It seems reasonable to conjecture that $\fX_P(\rb)$ is in fact flat over $\cO$, and its relative dimension is $\dim B_M$.

  The term \emph{big} is intended to contrast with the following alternative construction (which is perhaps less immediately natural; we introduce it because it is the Galois counterpart of an existing construction on the automorphic side, as we shall recall below). Let $\overline{M^\vee} = M^\vee / Z(M^\vee)$ (the Langlands dual of $M^{\der}$), and fix a \emph{Hodge type} $\mathbf{v}$ and an \emph{inertial type} $\tau$ for $\overline{M^\vee}$-valued representations of $\Gamma_{\Qp}$, in the sense of \cite{bellovingee19}. Then we say a lifting $\rho$ of $\rb$ to $\QQbar_p$ is \emph{$P^\vee$-nearly-ordinary of type $(\tau, \mathbf{v})$} if it is $P^\vee$-nearly-ordinary, and the composition $\Gamma_{\Qp} \xrightarrow{\rho} P^\vee(\QQbar_p) \to \overline{M^\vee}(\QQbar_p)$ has the given Hodge and inertial types. We define the \emph{small $P$-nearly-ordinary Galois eigenvariety} to be the universal deformation space $\fX_P(\rb; \tau, \mathbf{v})$ for deformations that are $P^\vee$-nearly-ordinary of the specified type. Using the formulae of \cite{bellovingee19} applied to $\overline{M^\vee}$ to compute the dimension of the local lifting rings, and assuming that $\rb$ is odd and $\mathbf{v}$ is sufficiently regular, we compute that the expected dimension of $\fX_P(\rb; \tau, \mathbf{v})$ is now given by $\dim Z_{M^\vee} = \dim Z_{M}$.

  \begin{remark}
   Note that the big and small Galois eigenvarieties coincide if $P$ is a Borel subgroup; but the dimension of the big eigenvariety \emph{grows} with $P$, while the dimension of the small eigenvariety \emph{shrinks} as $P$ grows. For instance, if $G = \GL_2$ and $P = G$, then $\fX_P(\rb)$ is just the unrestricted deformation space, which is 3-dimensional over $\cO$ as we have seen; but $\fX_P(\rb; \tau, \mathbf{v})$ has dimension 1, since for any $(\tau, \mathbf{v})$ there are only finitely many deformations of that type, so $\fX_P(\rb; \tau, \mathbf{v})$ has only finitely many points up to twisting by characters.
  \end{remark}

 \subsection{Big and small automorphic eigenvarieties}

  We can now ask if the above Galois-theoretic spaces have automorphic counterparts.

  \subsubsection{The big eigenvariety} Seeking an automorphic counterpart of the big Galois eigenvariety leads to the following question:\medskip

  \noindent\textbf{Question}. If $G$ is reductive over $\QQ$, and $P$ is a parabolic in $G / \Qp$ as above, is there a natural purely automorphic construction of a parameter space $\mathfrak{E}_P$ for systems of Hecke eigenvalues arising from cohomological automorphic representations for $G$ that are nearly ordinary for the parabolic $P$? \medskip

  We call this conjectural object $\mathfrak{E}_P$ the \emph{big $P$-nearly-ordinary automorphic eigenvariety}. We expect its dimension to be the same as its Galois analogue; in particular, if $G$ has discrete series its dimension should be $\dim B_M$, where $B_M$ is a Borel subgroup of the Levi of $P$ as before.

  The case when $P = B$ is a Borel subgroup is relatively well-understood; this is the setting of Hida theory. However, the case of non-Borel parabolics is much more mysterious. In this case, one can give a candidate for this space $\mathfrak{E}_P$ as follows.

  For any open compact $K \subset G(\mathbf{A}_{\mathrm{f}})$, we can form the $H^*(K, \cO)$ of Betti cohomology of the symmetric space for $G$ of level $K$, which is a finitely-generated graded $\cO$-module. This has an action of Hecke operators, and the subalgebra of its endomorphisms generated by Hecke operators at primes where $K$ is unramified, the \emph{spherical Hecke algebra of level $K$}, is commutative.

  We fix an open compact subgroup $K^p \subset G(\mathbf{A}_\mathrm{f}^p)$, and let $K_{n, p} = \{ g \in G(\Zp): g \bmod p^n \in N_P(\ZZ/p^n)\}$, where $N_P$ is the unipotent radical of $P$. Then, for any $n \ge 1$, $H^*(K, \cO)$ has a canonical idempotent endomorphism $e_P$ (the Hida ordinary projector associated to $P$), defined by $\lim_{r \to \infty} U_P^{r!}$ where $U_P$ is a suitable Hecke operator; this commutes with the spherical Hecke algebra.

  \begin{definition}
   With the above notations, let $\cT^{P-\nord}_n(K^p)$ be the quotient of the spherical Hecke algebra acting faithfully on $e_P H^*(K^p K_{p, n}, \cO)$; and define $\cT^{P-\nord}(K^p) = \varprojlim_n \cT^{P-\nord}_n(K^p)$.
  \end{definition}

  We conjecture that the formal spectrum of $\cT^{P-\nord}(K^p)$ should be the big $P$-nearly-ordinary eigenvariety. However, from this definition alone it is rather difficult to obtain much information about the properties of the resulting space (for instance, it is not clear whether $\cT^{P-\nord}(K^p)$ is Noetherian). As far as the author is aware, the only non-Borel cases where this construction is well-understood are the following:
  \begin{itemize}
  \item $G = \GL_2$ and $P = G$, as in Theorem \ref{thm:BE}.
  \item $G = \operatorname{Res}_{F^+ / \QQ}(U)$, where $U$ is a totally definite unitary group for some CM extension $F / F^+$, with $p$ split in $F$ and $F / F^+$ unramified at all finite places; and $P$ is a parabolic subgroup of $G(\Qp) \cong \GL_n(\Qp)^{[F^+: \Qp]}$ whose Levi subgroup is a product of copies of $\GL_1$ and $\GL_2$. This case has been studied extensively by Yiwen Ding \cite{ding19}.
  \end{itemize}

  In the definite unitary case, Ding proves that the localisation of $\cT^{P-\nord}(K^p)$ at the maximal ideal corresponding to an irreducible $\rb$ is a quotient of the global Galois deformation ring $\cR^{P^\vee-\nord}(\rb)$, and is therefore Noetherian; and he gives a lower bound for the relative dimension of $\cT^{P-\nord}(K^p)$ over $\cO$ (localised at the maximal ideal corresponding to some $\bar{\rho}$). This lower bound is exactly $\dim B_M$, the dimension conjectured for the Galois eigenvariety above.

  \begin{remark}
   Note that Ding's construction uses the $p$-adic local Langlands correspondence for $\GL_2(\Qp)$ in an essential way, so this approach will be much harder to generalise to cases where the Levi of $P$ is not a product of tori and copies of $\GL_2(\Qp)$.
  \end{remark}

  \subsubsection{The small eigenvariety} In contrast to the rather disappointing situation described above, there does seem to be a well-established theory for the ``little brother'' of this space -- the \emph{small $P$-nearly-ordinary automorphic eigenvariety}. This would be a parameter space for $P$-nearly-ordinary cohomological automorphic representations satisfying two additional conditions:
  \begin{itemize}
   \item the highest weight $\lambda$ of the algebraic representation of $G$ to whose cohomology $\Pi$ contributes should lie in a fixed equivalence class modulo characters of $M / M^{\der}$;
   \item the ordinary part $J_P(\Pi_p)^{\nord}$ of $J_P(\Pi_p)$, which is an irreducible smooth representation of $M(\Qp)$, should satisfy $e \cdot J_P(\Pi_p)^{\nord} \ne 0$ where $e$ is some fixed idempotent in the Hecke algebra of $M^{\der}(\Qp)$.
  \end{itemize}
  Note that both conditions are vacuous if $P$ is a Borel. These conditions are the automorphic counterparts of the fixed Hodge and inertial types up to twisting used to define the small $P$-nearly-ordinary Galois eigenvariety. See e.g. Mauger \cite{mauger05} for the construction of the small $P$-nearly-ordinary automorphic eigenvariety, and \cite{hillloeffler11} for a ``$P$-finite-slope'' analogue.

  \begin{remark}
   The most obvious choice of $e$ would be the idempotent projecting to the invariants for some choice of open compact subgroup of $M^{\der}(\Qp)$. For instance, Mauger's theory applies to $\Pi$ such that $J_P(\Pi_p)^{\nord}$ has non-zero invariants under $M^{\der}(\Zp)$, although it can be extended without difficulty to allow other more general idempotents. However, a craftier choice would be to take $e$ to be a \emph{special idempotent} in the sense of \cite{bushnellkutzko98}, corresponding to a choice of Bernstein component for $M^{\der}(\Qp)$; these Bernstein components are expected to biject with inertial types on the Galois side (the inertial local Langlands correspondence for $M^{\der}(\Qp)$), while the highest weights $\lambda$ biject with Hodge types, so we obtain a natural dictionary between the defining data at $p$ for the Galois and automorphic versions of the small $P$-nearly-ordinary eigenvariety.
  \end{remark}

  \subsubsection{$R = T$ theorems} Both big and small automorphic eigenvarieties should, clearly, decompose into disjoint unions of pieces indexed by mod $p$ Hecke eigenvalue systems. We can then formulate the (\emph{extremely} speculative) ``parabolic $R = T$'' conjecture that each of these pieces should correspond to one of the big or small Galois eigenvarieties of the previous section, for a mod $p$ Galois representation $\rb$ determined by the mod $p$ Hecke eigensystem.

  In the case when $G$ is a definite unitary group, results of this kind have been proven by Geraghty \cite{geraghty} when $P$ is a Borel subgroup; and when the Levi of $P$ is a product of $\GL_1$'s and $\GL_2$'s, Ding proves in \cite{ding19} the slightly weaker result that the map from $\cR^{P^\vee-\nord}(\rb)$ to the $\rb$-localisation of $\cT^{P-\nord}(K^p)$ is surjective with nilpotent kernel, after possibly extending the totally real field $F^+$ (an ``$R^{\mathrm{red}} = T^{\mathrm{red}}$'' theorem).

  \subsection{Miscellaneous remarks}
  \begin{remark}
   The 4-dimensional parameter space for $\operatorname{GSp}_4 \times \GL_2$ mentioned at the end of \S\ref{sect:gsp4gl2} is a slightly artificial hybrid: the it is the product of the \emph{big} automorphic (or Galois) eigenvariety for $P = G = \GL_2$ with the \emph{small} automorphic eigenvariety for the Klingen parabolic of $\operatorname{GSp}_4$. Of course, we expect that the ``correct'' parameter space for this construction is the product of the big eigenvarieties for the two groups, which would have dimension 7 (or 6 if we factor out a redundant twist, which corresponds to working with the group $\operatorname{GSp}_4 \times_{\GL_1} \GL_2$). However, we do not know how to construct $p$-adic $L$-functions on this eigenvariety at present.
  \end{remark}

  \begin{remark}
   The small $P$-nearly-ordinary eigenvariety is finite over the ``weight space'' parametrising characters of $(M / M^{\der})(\Zp)$. Moreover, in Shimura-variety settings it is flat over this space (up to a minor grain of salt if $Z_G$ has infinite arithmetic subgroups). It is natural to ask if there is an analogous, purely locally defined ``big $P$-weight space'' over which the big eigenvariety $\mathfrak{E}_P$ is finite; the results of \cite{ding19} suggest that a candidate could be a universal deformation space for $p$-adic Banach representations of $M(\Qp)$ on the automorphic side, or $M^\vee$-valued representations of $\Gamma_{\Qp}$ on the Galois side. However, these spaces will in general have much larger dimension than the eigenvariety, so there does not seem to be a natural choice of local parameter space over which $\mathfrak{E}_P$ is finite and flat.
  \end{remark}

\providecommand{\bysame}{\leavevmode\hbox to3em{\hrulefill}\thinspace}
\providecommand{\MR}[1]{}
\renewcommand{\MR}[1]{%
 MR \href{http://www.ams.org/mathscinet-getitem?mr=#1}{#1}.
}
\providecommand{\href}[2]{#2}
\newcommand{\articlehref}[2]{\href{#1}{#2}}

\end{document}